\newtheorem{theorem}{Theorem}
\newtheorem{definition}[theorem]{Definition}
\newtheorem{lemma}[theorem]{Lemma}
\newtheorem{notation}[theorem]{Notation}
\newtheorem{proposition}[theorem]{Proposition}
\newenvironment{proof}[1][Proof]{\noindent\textbf{#1.} }{\ \rule{0.5em}{0.5em}}
\begin{document}

\title{Integration of branched rough paths}
\author{Xinru Liu, Danyu Yang}
\maketitle

\begin{abstract}
When the one-form is $Lip\left( \gamma -1\right) $ with $\gamma >p\geq 1$,
we construct the integral of a branched $p$-rough path, which defines
another branched $p$-rough path. We derive a quantitative bound for this
integral and prove that it depends continuously on the driving branched
rough path in rough path metric. Moreover, we prove that the first level
branched rough integral coincides with a first level integral of the
associated $\Pi $-rough path.
\end{abstract}

\section{Introduction}

In his pioneering work \cite{lyons1998differential}, Lyons developed the
theory of rough paths. The theory provides a rigorous framework for solving
differential equations driven by highly oscillatory signals, extending well
beyond the classical setting of Brownian motion and semimartingales. Rough
path theory has found applications in differential equations driven by
non-Markovian processes e.g. Gaussian processes \cite%
{inahama2024moderate,boedihardjo2024lack}, stochastic partial differential
equations \cite{li2025averaging}, robust financial modelling \cite%
{bank2025rough}, numerical analysis \cite{duc2023numerical} and deep
learning \cite{walker2024log,walker2025structured} etc.

The central idea of rough path theory is to enhance an oscillatory path with
higher level information so that the enhanced path takes values in a group.
In the case of geometric rough paths, the group is the free nilpotent Lie
group \cite{lyons1998differential}, which can equivalently be characterised
as a truncated group of characters of the shuffle Hopf algebra \cite%
{reutenauer1993free}. The shuffle product encodes an abstract integration by
parts formula. However, this assumption can be restrictive when dealing with
It\^{o} calculus or with the non-geometric algebraic structures arising in
stochastic partial differential equations.

In \cite{gubinelli2010ramification}, Gubinelli introduced branched rough
paths. Unlike geometric rough paths, branched rough paths take values in a
truncated group of characters of the Connes Kreimer Hopf algebra \cite%
{connes1998hopf}. As a result, branched rough paths naturally accommodate It%
\^{o} calculus, and their associated algebraic structure has been widely
used in solving and analysing stochastic partial differential equations \cite%
{hairer2014theory}.

Lyons \cite{lyons1998differential} defined the solution to a rough
differential equation as a fixed point of the associated full integral
mapping. In contrast, Gubinelli \cite%
{gubinelli2004controlling,gubinelli2010ramification} formulated the solution
as a fixed point in the space of controlled paths. A controlled path takes
values in a vector space and \textquotedblleft remembers\textquotedblright\
the driving rough path as it evolves. This vector-space formulation offers
substantial convenience and skips the explicit construction of the full
integral. However, the construction of the full integral in Lyons' framework
remains important, for instance when studying the differentiability of the
solution path with respect to the initial condition and the driving rough
path.

In \cite{lyons2015theory}, Lyons and Yang introduced the one-form approach
to solving rough differential equations, both in the geometric and branched
settings. In particular, they proved that the differences between successive
Picard iterations decay factorially, and that the solution exists uniquely
and is continuous with respect to the driving rough path. In \cite%
{yang2019algebraic}, Yang lifted polynomial one-forms on the underlying
vector space to exact one-forms on the free nilpotent Lie group, and
interpreted the full geometric rough integral as the integral of a
slowly-varying exact one-form.

In \cite{gyurko2016differential}, Gyurk\'{o} introduced $\Pi $-rough paths
and proved the unique solvability and stability of the solution of a
differential equation driven by a $\Pi $-rough path. $\Pi $-rough paths are
inhomogeneous geometric rough paths, an idea that can be traced back to
Lyons \cite{lyons1998differential}. Based on results in combinatorial
algebra that the Grossman\ Larson algebra is free \cite%
{foissy2002finite,chapoton2010free}, Boedihardjo and Chevyrev \cite%
{boedihardjo2019isomorphism} constructed an isomorphism between branched
rough paths and a class of $\Pi $-rough paths.

In this paper, we work with the vector space of effects associated with a
given branched $p$-rough path for $p\geq 1$. Effects may be compared to
controlled paths; their relationship is analogous to that between indefinite
integrals and integrands. In particular, the series of Picard iterations and
the solution of a differential equation are effects \cite{lyons2015theory}.
Effects systematically compare linear maps at different points via a
parallel translation on the truncated Butcher group, thereby avoiding
complex combinatorics. We prove that effects are stable and continuous in
the space of one-forms under both multiplication (Lemma \ref{Lemma effects
are stable under multiplication}) and iterated integration (Lemma \ref{Lemma
effects are stable under iterated integral}), as required for constructing
the integral.

When the one-form is $Lip\left( \gamma -1\right) $ for $\gamma >p\geq 1$, we
define the integral of a branched $p$-rough path, which is another branched $%
p$-rough path. In Theorem \ref{Theorem integral of branched rough paths}, we
obtain a quantitative bound for this integral and prove that the integral is
continuous with respect to the driving branched rough path in rough path
metric. In Proposition \ref{Proposition same first level integrals}, we
prove that the first level of this integral coincides with a first level
integral of the corresponding $\Pi $-rough path.

\section{Notations and Definitions}

A tree is a finite connected graph without cycles. A rooted tree is a tree
with a distinguished vertex, called the root. We call a rooted tree a tree.
We assume trees are non-planar, so that the children trees of each vertex
are unordered and hence commute. The leaves of a tree are the vertices that
have no children.

A forest is a commutative monomial of trees, including the empty forest
denoted by $\epsilon $. A forest is labelled if each vertex in the forest is
assigned a label from a given label set. We refer to a labelled forest
simply as a forest. The degree $\left\vert \rho \right\vert $ of a forest $%
\rho $ is given by the total number of vertices in $\rho $ with $\left\vert
\epsilon \right\vert =0$. The depth of a forest is the largest number of
vertices on any simple path from a root to a leaf; the depth of $\epsilon $
is $0$. For integer $d\geq 1$, let $T_{d}$($F_{d}$) denote the set of trees
(forests) whose vertices are labelled by $\left\{ 1,\dots ,d\right\} $. For
integer $n\geq 1$, let $T_{d}^{n}$($F_{d}^{n}$) denote the subset of $T_{d}$(%
$F_{d}$) consisting of elements of degrees $1,2,\dots ,n$.

For a labelled forest $\rho $ and $i\in \left\{ 1,\dots ,d\right\} $, let $%
\left[ \rho \right] _{i}$ denote the labelled tree obtained by grafting the
roots of all trees in $\rho $ to a new root with label $i$. Let $\bullet
_{i} $ denote the tree consisting of one vertex labelled by $i$. Following
\cite{gubinelli2010ramification}, define a symmetry factor $\sigma
:F_{d}\rightarrow
\mathbb{N}
$ as $\sigma \left( \bullet _{i}\right) =1$ for $i=1,\dots ,d$ and%
\begin{equation}
\sigma \left( \tau _{1}^{n_{1}}\cdots \tau _{l}^{n_{l}}\right) =\sigma
\left( \left[ \tau _{1}^{n_{1}}\cdots \tau _{l}^{n_{l}}\right] _{i}\right)
=n_{1}!\cdots n_{l}!\sigma \left( \tau _{1}\right) ^{n_{1}}\cdots \sigma
\left( \tau _{l}\right) ^{n_{l}}  \label{definition of symmetry factor}
\end{equation}%
where $i\in \left\{ 1,\dots ,d\right\} $ and $\tau _{j}\in T_{d}$, $%
j=1,2\dots ,l$, are different labelled trees with labels taken into account.

Consider a labelled version of Connes Kreimer Hopf algebra \cite[p.214]%
{connes1998hopf}. The product is given by the commutative multiplication of
monomials of trees. The coproduct is given by admissible cuts \cite[p.215]%
{connes1998hopf}, denoted as $\bigtriangleup $. For integers $n\geq 1$ and $%
d\geq 1$, let $G_{d}^{n}$ denote the step-$n$ truncated group of characters
of Connes Kreimer Hopf algebra labelled with $\left\{ 1,2,\dots ,d\right\} $%
. More specifically, $a\in G_{d}^{n}$ iff $a:F_{d}\rightarrow
\mathbb{R}
$ satisfies%
\begin{equation*}
\left( a,\rho _{1}\rho _{2}\right) =\left( a,\rho _{1}\right) \left( a,\rho
_{2}\right)
\end{equation*}%
for every $\rho _{1},\rho _{2}\in F_{d},\left\vert \rho _{1}\right\vert
+\left\vert \rho _{2}\right\vert \leq n$ and $\left( a,\rho \right) =0$ for $%
\rho \in F_{d},\left\vert \rho \right\vert >n$. The product in $G_{d}^{n}$
is given by, for $a,b\in G_{d}^{n}$, $\rho \in F_{d}$, $\left\vert \rho
\right\vert \leq n$,%
\begin{equation*}
\left( ab,\rho \right) =\sum_{\bigtriangleup \rho }\left( a,\rho _{\left(
1\right) }\right) \left( b,\rho _{\left( 2\right) }\right)
\end{equation*}%
We equip $a\in G_{d}^{n}$ with the norm:%
\begin{equation*}
\left\Vert a\right\Vert :=\max_{\rho \in F_{d},\left\vert \rho \right\vert
=1,\dots ,n}\left\vert \left( a,\rho \right) \right\vert ^{\frac{1}{%
\left\vert \rho \right\vert }}
\end{equation*}

\begin{notation}
A finite partition $D$ of $\left[ 0,T\right] $ consists of points $\left\{
t_{k}\right\} _{k=0}^{n},0=t_{0}<t_{1}<\cdots <t_{n}=T$ for some integer $%
n\geq 1$. Define the mesh of $D$ as $\left\vert D\right\vert
:=\max_{k=0}^{n-1}\left\vert t_{k+1}-t_{k}\right\vert $.
\end{notation}

For $p\geq 1$, let $\left[ p\right] $ denote the largest integer that is
less or equal to $p$.

\begin{definition}[$p$-variation]
Suppose $\left( G,\left\Vert \cdot \right\Vert \right) $ is a topological
group. Let $X:\left[ 0,T\right] \rightarrow G$ be continuous. For $s<t$ in $%
\left[ 0,T\right] $, denote $X_{s,t}:=X_{s}^{-1}X_{t}$. For $p\geq 1$,
define the $p$-variation of $X$ as%
\begin{equation*}
\left\Vert X\right\Vert _{p-var,\left[ 0,T\right] }:=\sup_{D\subset \left[
0,T\right] }\left( \sum_{k=0}^{n-1}\left\Vert X_{t_{k},t_{k+1}}\right\Vert
^{p}\right) ^{\frac{1}{p}}
\end{equation*}%
where the supremum is taken over all finite partitions of $\left[ 0,T\right]
$.
\end{definition}

\begin{definition}[branched $p$-rough path]
For $p\geq 1$, $X:\left[ 0,T\right] \rightarrow G_{d}^{\left[ p\right] }$ is
a branched $p$-rough path if $X$ is continuous and $\left\Vert X\right\Vert
_{p-var,\left[ 0,T\right] }<\infty $.
\end{definition}

Based on \cite%
{foissy2002algebresb,hoffman2003combinatorics,panaite2000relating}, the
graded dual of Connes Kreimer Hopf algebra is isomorphic to Grossman Larson
Hopf algebra \cite{grossman1989hopf} with product \cite[$\left( 3.1\right) $]%
{grossman1989hopf} and coproduct \cite[p.199]{grossman1989hopf}. We assume
Grossman Larson Hopf algebra is a Hopf algebra of forests by deleting the
additional root in \cite{grossman1989hopf}. Based on \cite%
{foissy2002finite,chapoton2010free}, Grossman Larson algebra is freely
generated by a family of trees. Denote this family of trees as $B$ and let $%
B_{d}$ denote the $\left\{ 1,\dots ,d\right\} $-labelled version of $B$.

\begin{notation}
\label{Notation Bd[p]}For $p\geq 1$, let $B_{d}^{\left[ p\right] }=\left\{
\nu _{1},\nu _{2},\cdots ,\nu _{K}\right\} $ denote the set of elements of $%
B_{d}$ whose degree is less or equal to $\left[ p\right] $.
\end{notation}

Then we consider a class of $\Pi $-rough paths that are isomorphic to
branched $p$-rough paths \cite{boedihardjo2019isomorphism}. The framework
for $\Pi $-rough paths in \cite{gyurko2016differential} is more general than
what we present here.

\begin{notation}
Let $W$ denote the set of finite sequences of elements in $\left\{ 1,\dots
,K\right\} $ including the empty sequence denoted as $\eta $. Set $%
\left\vert \eta \right\vert =0$. For $w=i_{1}\cdots i_{m}\in W$, define%
\begin{equation*}
\left\vert w\right\vert :=\left\vert \nu _{i_{1}}\right\vert +\left\vert \nu
_{i_{2}}\right\vert +\cdots +\left\vert \nu _{i_{m}}\right\vert
\end{equation*}%
where $\nu _{i_{j}},j=1,\dots ,m$ are defined in Notation \ref{Notation
Bd[p]}.
\end{notation}

\begin{notation}
Let $G_{\Pi }$ denote the set of mappings $g:\left\{ w\in W|\left\vert
w\right\vert \leq \left[ p\right] \right\} \rightarrow
\mathbb{R}
$ that satisfy%
\begin{equation*}
\left( g,w_{1}\right) \left( g,w_{2}\right) =\left( g,w_{1}\shuffle %
w_{2}\right)
\end{equation*}%
for every $w_{1},w_{2}\in W$, $\left\vert w_{1}\right\vert +\left\vert
w_{2}\right\vert \leq \left[ p\right] $, where $\shuffle$ denotes the
shuffle product \cite[Section 1.4]{reutenauer1993free}.
\end{notation}

For $g,h\in G_{\Pi }$ and $w\in W$, $\left\vert w\right\vert \leq \left[ p%
\right] $, define $\left( gh,w\right) =\sum_{uv=w}\left( g,u\right) \left(
h,v\right) $ with $uv$ denoting the concatenation of $u$ with $v$. Then $%
G_{\Pi }$ is a truncated group of characters of the shuffle Hopf algebra
\cite[p.31]{reutenauer1993free}. We equip $G_{\Pi }$ with the norm:%
\begin{equation*}
\left\Vert g\right\Vert :=\max_{w\in W,\left\vert w\right\vert =1,\dots ,
\left[ p\right] }\left\vert \left( g,w\right) \right\vert ^{\frac{1}{%
\left\vert w\right\vert }}
\end{equation*}

\begin{definition}[$\Pi $-rough path]
Suppose $Z:\left[ 0,T\right] \rightarrow G_{\Pi }$ is continuous. Then $Z$
is a $\Pi $-rough path if $\left\Vert Z\right\Vert _{p-var,\left[ 0,T\right]
}<\infty $.
\end{definition}

For $\gamma >0$, let $\lfloor \gamma \rfloor $ denote the largest integer
that is strictly less than $\gamma $. For vector spaces $U$ and $V$, let $%
L\left( U,V\right) $ denote the set of linear maps from $U$ to $V$. Recall
high order Lipschitz functions defined in \cite[Definition 1.2.4]%
{lyons1998differential}.

\begin{definition}
For $\gamma >0$, $f:%
\mathbb{R}
^{d}\rightarrow L\left(
\mathbb{R}
^{d},%
\mathbb{R}
^{e}\right) $ is $Lip\left( \gamma \right) $ if%
\begin{equation*}
\left\Vert f\right\Vert _{Lip\left( \gamma \right) }:=\max_{k=0,\dots
,\lfloor \gamma \rfloor }\left\Vert D^{k}f\right\Vert _{\infty
}+\sup_{x,y\in
\mathbb{R}
^{d}}\frac{\left\Vert \left( D^{\lfloor \gamma \rfloor }f\right) \left(
x\right) -\left( D^{\lfloor \gamma \rfloor }f\right) \left( y\right)
\right\Vert }{\left\vert x-y\right\vert ^{\gamma -\lfloor \gamma \rfloor }}%
<\infty
\end{equation*}%
where $D^{0}f:=f$, $D^{k}f$ denotes the $k$th Fr\'{e}chet derivative of $f$
for $k=1,\dots ,\lfloor \gamma \rfloor $, and $\left\Vert \cdot \right\Vert
_{\infty }$ denotes the uniform norm.
\end{definition}

Then we define slowly-varying one-form \cite[Definition 8]{yang2019algebraic}
and effect \cite[Definition 9]{yang2019algebraic} in the branched setting.

\begin{notation}
Let $E^{%
\mathbb{R}
^{e}}$ denote the vector bundle on $G_{d}^{\left[ p\right] }$ that
associates each $a\in G_{d}^{\left[ p\right] }$ with the vector space%
\begin{equation*}
E_{a}^{%
\mathbb{R}
^{e}}:=\left\{ \left. \phi :G_{d}^{\left[ p\right] }\rightarrow
\mathbb{R}
^{e}\right\vert \phi =\sum_{\rho \in F_{d}^{\left[ p\right] }}\phi ^{\rho },%
\text{ }\phi ^{\rho }\in L\left(
\mathbb{R}
,%
\mathbb{R}
^{e}\right) \right\}
\end{equation*}%
where $\phi \left( a\right) =\sum_{\rho \in F_{d}^{\left[ p\right] }}\phi
^{\rho }\left( a,\rho \right) $ for $a\in G_{d}^{\left[ p\right] }$. The
parallel translation on $E^{%
\mathbb{R}
^{e}}$ is given by, for $\phi \in E_{a}^{%
\mathbb{R}
^{e}}$ and $b\in G_{d}^{\left[ p\right] }$, define $\phi _{b}\in E_{ab}^{%
\mathbb{R}
^{e}}$ as%
\begin{equation*}
\phi _{b}\left( c\right) :=\phi \left( bc\right) -\phi \left( b\right) \
\text{for }c\in G_{d}^{\left[ p\right] }\text{.}
\end{equation*}
\end{notation}

Denote $\left\Vert \phi \right\Vert :=\max_{\rho \in F_{d}^{\left[ p\right]
}}\left\Vert \phi \right\Vert _{\rho }$ with $\left\Vert \phi \right\Vert
_{\rho }:=\left\vert \phi ^{\rho }\left( 1\right) \right\vert $.

\begin{definition}[slowly-varying one-form]
For $p\geq 1$, let $X:\left[ 0,T\right] \rightarrow G_{d}^{\left[ p\right] }$
be a branched $p$-rough path. Then $\beta :X_{t}\mapsto E_{X_{t}}^{%
\mathbb{R}
^{e}}$ for $t\in \left[ 0,T\right] $ is a slowly-varying one-form along $X$
taking values in $%
\mathbb{R}
^{e}$ if there exists $\theta >1$ such that%
\begin{equation*}
\left\Vert \beta \right\Vert _{\theta }:=\sup_{t\in \left[ 0,T\right]
}\left\Vert \beta \left( X_{t}\right) \right\Vert +\max_{\rho \in F_{d}^{%
\left[ p\right] }}\sup_{0\leq s<t\leq T}\frac{\left\Vert \beta \left(
X_{t}\right) -\beta \left( X_{s}\right) _{X_{s,t}}\right\Vert _{\rho }}{%
\omega \left( s,t\right) ^{\theta -\frac{\left\vert \rho \right\vert }{p}}}%
<\infty
\end{equation*}%
where $\omega \left( s,t\right) :=\left\Vert X\right\Vert _{p-var,\left[ s,t%
\right] }^{p}$.
\end{definition}

\begin{definition}[effect]
For $p\geq 1$, let $X:\left[ 0,T\right] \rightarrow G_{d}^{\left[ p\right] }$
be a branched $p$-rough path. Then $y:\left[ 0,T\right] \rightarrow
\mathbb{R}
^{e}$ is an effect of $X$, if there exist $\xi \in
\mathbb{R}
^{e}$ and a slowly-varying one-form $\beta $ along $X$ taking values in $%
\mathbb{R}
^{e}$ such that%
\begin{equation*}
y_{t}=\xi +\int_{0}^{t}\beta \left( X_{r}\right) dX_{r}\text{ for }t\in %
\left[ 0,T\right] .
\end{equation*}
\end{definition}

The integral is defined as%
\begin{equation*}
\int_{0}^{t}\beta \left( X_{r}\right) dX_{r}:=\lim_{\left\vert D\right\vert
\rightarrow 0}\sum_{k=0}^{n-1}\beta \left( X_{t_{k}}\right) \left(
X_{t_{k},t_{k+1}}\right)
\end{equation*}%
where the limit is taken over all finite partitions of $\left[ 0,t\right] $
as their mesh tends to zero. The limit exists based on the slowly-varying
property of $\beta $ and an argument similar to Young \cite[p.254, result 5]%
{young1936inequality}.

\begin{lemma}
\label{Lemma estimate of increments of effects}For $p\geq 1$, suppose $X:%
\left[ 0,T\right] \rightarrow G_{d}^{\left[ p\right] }$ is a branched $p$%
-rough path. Let $\beta $ be a slowly-varying one-form along $X$ taking
values in $%
\mathbb{R}
^{e}$ such that $\left\Vert \beta \right\Vert _{\theta }<\infty $ for some $%
\theta >1$. Define $y_{t}=\int_{0}^{t}\beta \left( X_{r}\right) dX_{r}$ for $%
t\in \left[ 0,T\right] $. Denote $\omega \left( s,t\right) :=\left\Vert
X\right\Vert _{p-var,\left[ s,t\right] }^{p}$. Then there exists a constant $%
C$ that only depends on $p,d,\theta ,\omega \left( 0,T\right) $ such that
for every $s<t$ in $\left[ 0,T\right] $,%
\begin{eqnarray*}
\left\vert y_{t}-y_{s}-\beta \left( X_{s}\right) \left( X_{s,t}\right)
\right\vert &\leq &C_{p,d,\theta ,\omega \left( 0,T\right) }\left\Vert \beta
\right\Vert _{\theta }\omega \left( s,t\right) ^{\theta } \\
\left\vert y_{t}-y_{s}\right\vert &\leq &C_{p,d,\theta ,\omega \left(
0,T\right) }\left\Vert \beta \right\Vert _{\theta }\omega \left( s,t\right)
^{\frac{1}{p}}.
\end{eqnarray*}
\end{lemma}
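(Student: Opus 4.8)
The plan is to prove both estimates via a Young-type (sewing) argument applied to the elementary increment $\Xi_{s,t}:=\beta\left(X_s\right)\left(X_{s,t}\right)$. The key is to control the defect of additivity $\delta\Xi_{s,u,t}:=\Xi_{s,t}-\Xi_{s,u}-\Xi_{u,t}$ for $s<u<t$ and show that it is of order $\omega\left(s,t\right)^{\theta}$ with $\theta>1$.

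First I would exploit the multiplicativity $X_{s,t}=X_{s,u}X_{u,t}$ together with the definition of parallel translation to rewrite $\Xi_{s,t}-\Xi_{s,u}=\beta\left(X_s\right)\left(X_{s,u}X_{u,t}\right)-\beta\left(X_s\right)\left(X_{s,u}\right)=\beta\left(X_s\right)_{X_{s,u}}\left(X_{u,t}\right)$. Subtracting $\Xi_{u,t}=\beta\left(X_u\right)\left(X_{u,t}\right)$ then gives $\delta\Xi_{s,u,t}=\left[\beta\left(X_s\right)_{X_{s,u}}-\beta\left(X_u\right)\right]\left(X_{u,t}\right)$, so the defect is precisely the slowly-varying discrepancy evaluated at $X_{u,t}$. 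Both $\beta\left(X_s\right)_{X_{s,u}}$ and $\beta\left(X_u\right)$ lie in the fibre over $X_u$, so their $\rho$-norms are well defined.

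To bound this I would expand over forests: since $\phi\left(a\right)=\sum_{\rho}\left(a,\rho\right)\phi^{\rho}\left(1\right)$, the triangle inequality yields $\left\vert\delta\Xi_{s,u,t}\right\vert\le\sum_{\rho\in F_d^{\left[p\right]}}\left\vert\left(X_{u,t},\rho\right)\right\vert\,\left\Vert\beta\left(X_u\right)-\beta\left(X_s\right)_{X_{s,u}}\right\Vert_{\rho}$. The norm on $G_d^{\left[p\right]}$ gives $\left\vert\left(X_{u,t},\rho\right)\right\vert\le\left\Vert X_{u,t}\right\Vert^{\left\vert\rho\right\vert}\le\omega\left(u,t\right)^{\left\vert\rho\right\vert/p}$, while the slowly-varying estimate bounds the discrepancy by $\left\Vert\beta\right\Vert_{\theta}\omega\left(s,u\right)^{\theta-\left\vert\rho\right\vert/p}$. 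Because $\theta-\left\vert\rho\right\vert/p>0$ for every $\rho$ (as $\left\vert\rho\right\vert\le\left[p\right]\le p$ and $\theta>1$) and $\omega$ is superadditive, each summand is at most $\omega\left(s,t\right)^{\theta}$, so $\left\vert\delta\Xi_{s,u,t}\right\vert\le C_{p,d}\left\Vert\beta\right\Vert_{\theta}\omega\left(s,t\right)^{\theta}$ with $C_{p,d}$ the number of forests in $F_d^{\left[p\right]}$.

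With the defect bound in hand, the standard Young argument finishes the proof. Given a partition of $\left[s,t\right]$ into $n$ subintervals, superadditivity guarantees an interior point whose two neighbouring intervals carry $\omega$-mass bounded by $2\omega\left(s,t\right)/\left(n-1\right)$; removing it changes the Riemann sum by a single defect term, and summing these changes against the convergent series $\sum_{m\ge1}m^{-\theta}$ yields $\left\vert y_t-y_s-\Xi_{s,t}\right\vert\le C_{p,d,\theta}\left\Vert\beta\right\Vert_{\theta}\omega\left(s,t\right)^{\theta}$; passing to the mesh-zero limit gives the first inequality. The second follows by the triangle inequality after bounding $\left\vert\Xi_{s,t}\right\vert\le\sum_{\rho}\omega\left(s,t\right)^{\left\vert\rho\right\vert/p}\left\Vert\beta\right\Vert_{\theta}$ and factoring out $\omega\left(s,t\right)^{1/p}$ from both terms, absorbing the residual powers $\omega\left(s,t\right)^{\left(\left\vert\rho\right\vert-1\right)/p}$ and $\omega\left(s,t\right)^{\theta-1/p}$ into the constant via $\omega\left(s,t\right)\le\omega\left(0,T\right)$; this is exactly where the dependence of $C$ on $\omega\left(0,T\right)$ enters. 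I expect the defect computation to be the main obstacle, specifically recognizing that the parallel-translation identity converts the non-additivity of the elementary increment into the slowly-varying discrepancy, after which the Young summation is routine.
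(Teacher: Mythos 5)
Your proposal is correct and follows essentially the same route as the paper, which simply invokes Young's argument of sequentially removing a chosen point from a finite partition (citing Young, p.254, result 5) and deduces the second estimate from the first. Your defect computation $\delta\Xi_{s,u,t}=\beta\left(X_s\right)_{X_{s,u}}\left(X_{u,t}\right)-\beta\left(X_u\right)\left(X_{u,t}\right)$, bounded via the slowly-varying norm, is exactly the step the paper leaves implicit in that citation, and the rest of your argument fills in the standard details faithfully.
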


\begin{proof}
The first estimate can be proved by sequentially removing a chosen point
from a finite partition as in Young \cite[p.254, result 5]%
{young1936inequality}. The second follows from the first.
\end{proof}

\section{Main Result}

\begin{notation}
For $\gamma >p\geq 1$, suppose $X:\left[ 0,T\right] \rightarrow G_{d}^{\left[
p\right] }$ is a branched $p$-rough path and $f=\left( f_{1},\cdots
,f_{d}\right) :%
\mathbb{R}
^{d}\rightarrow L\left(
\mathbb{R}
^{d},%
\mathbb{R}
^{e}\right) $ is a $Lip\left( \gamma -1\right) $ one-form where $f_{i}:%
\mathbb{R}
^{d}\rightarrow
\mathbb{R}
^{e}$, $i=1,2,\dots ,d$.
\end{notation}

\begin{notation}
\label{Notation f tau in integrals}If $\tau =\bullet _{i}$ for $i\in \left\{
1,\dots ,d\right\} $, define $f_{\tau }:=f_{i}$; if $\tau =\left[ \bullet
_{i_{1}}\cdots \bullet _{i_{l}}\right] _{i}$ for $i_{j},i\in \left\{ 1,\dots
,d\right\} $ and $l\in \left\{ 1,\dots ,\left[ p\right] -1\right\} $, define
$f_{\tau }:=D_{i_{1},\cdots ,i_{l}}^{l}f_{i}$; if the depth of $\tau \in
T_{d}^{\left[ p\right] }$ is greater than $2$, define $f_{\tau }:=0$.
\end{notation}

\begin{lemma}
\label{Lemma beta is a slowly varying one-form}With the symmetry factor $%
\sigma $ defined at $\left( \ref{definition of symmetry factor}\right) $,
define%
\begin{equation}
\beta \left( X_{t}\right) \left( a\right) :=\sum_{\tau \in T_{d},\left\vert
\tau \right\vert =1,\dots ,\left[ p\right] }f_{\tau }\left( x_{t}\right)
\frac{\left( a,\tau \right) }{\sigma \left( \tau \right) }\text{ for }a\in
G_{d}^{\left[ p\right] },  \label{Definition beta first level one-form}
\end{equation}%
where $f_{\tau }$ is defined in Notation \ref{Notation f tau in integrals}
and $x_{t}$ denotes the projection of $X_{t}$ to $%
\mathbb{R}
^{d}$. Then $\beta $ is a slowly-varying one-form along $X$ taking values in
$%
\mathbb{R}
^{e}$, and with $\theta :=\frac{\gamma \wedge \left( \left[ p\right]
+1\right) }{p}>1$,
\begin{equation*}
\left\Vert \beta \right\Vert _{\theta }\leq \left\Vert f\right\Vert
_{Lip\left( \gamma -1\right) }.
\end{equation*}
\end{lemma}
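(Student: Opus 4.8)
The plan is to verify the two defining quantities of $\|\beta\|_{\theta}$ separately, reducing everything to a Taylor expansion of the maps $f_{\tau}$ controlled by the $Lip(\gamma-1)$ norm. First I would record the structural observation that both $\beta(X_{t})$ and the parallel translate $\beta(X_{s})_{X_{s,t}}$ are supported only on trees of depth at most $2$. Indeed, by Notation \ref{Notation f tau in integrals} we have $f_{\tau}=0$ whenever $\tau$ has depth greater than $2$, so in \eqref{Definition beta first level one-form} the sum effectively runs over $\tau=\bullet_{i}$ and $\tau=[\bullet_{i_{1}}\cdots\bullet_{i_{l}}]_{i}$ only. Writing out $\beta(X_{s})_{X_{s,t}}(c)=\beta(X_{s})(X_{s,t}c)-\beta(X_{s})(X_{s,t})$ and expanding $(X_{s,t}c,\tau)=\sum_{\bigtriangleup\tau}(X_{s,t},\tau_{(1)})(c,\tau_{(2)})$ shows that the component on a forest $\rho$ is carried by the trunk $\tau_{(2)}=R^{c}(\tau)$ of an admissible cut, which is always a rooted tree; hence only tree components appear, and they have depth at most $2$. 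For the supremum term I would bound $\sup_{t}\|\beta(X_{t})\|=\sup_{t}\max_{\tau}|f_{\tau}(x_{t})|/\sigma(\tau)$ using $\sigma(\tau)\ge 1$ and $\max_{\tau}|f_{\tau}(x_{t})|\le\max_{0\le k\le[p]-1}\|D^{k}f\|_{\infty}$; since $[p]<\gamma$ forces $[p]-1\le\lfloor\gamma-1\rfloor$, these derivatives are controlled by $\|f\|_{Lip(\gamma-1)}$.

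The core is the H\"older-type term. Fixing a tree $\rho$ of depth at most $2$, I would compute the $\rho$-component of $\beta(X_{t})-\beta(X_{s})_{X_{s,t}}$ from the coproduct. Using that single vertices are primitive and that characters are multiplicative on forests, $(X_{s,t},\bullet_{i})=x_{t}^{i}-x_{s}^{i}=:\Delta^{i}$ and $(X_{s,t},\bullet_{l_{1}}\cdots\bullet_{l_{q}})=\prod_{k}\Delta^{l_{k}}$. For $\rho=[\bullet_{j_{1}}\cdots\bullet_{j_{r}}]_{i}$ the only trees $\tau$ with trunk $\rho$ and $f_{\tau}\neq 0$ are obtained by grafting extra single vertices $\bullet_{l_{1}}\cdots\bullet_{l_{q}}$ onto the root $i$, with pruned forest $\tau_{(1)}=\bullet_{l_{1}}\cdots\bullet_{l_{q}}$ and $f_{\tau}=D^{r+q}f_{i}$. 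The key combinatorial identity I must establish is that the symmetry factors $\sigma$ together with the multiplicities of admissible cuts reproduce exactly the Taylor coefficients $1/q!$; this is the B-series / elementary-differential bookkeeping, verified on small trees by a direct count (cutting $q$ of the $r+q$ leaves contributes a binomial factor, and $\sigma([\cdots]_{i})$ supplies the remaining factorials). Granting this, the $\rho$-component equals $\sigma(\rho)^{-1}$ times the Taylor remainder of $g:=f_{\rho}=D^{r}_{j_{1}\cdots j_{r}}f_{i}$ at $x_{s}$ evaluated at $x_{t}$, truncated at order $[p]-|\rho|$; trees of depth at least $3$ give identically zero, since then $f_{\rho}=0$ and every $\tau$ with trunk $\rho$ also has depth at least $3$.

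It remains to estimate this remainder. Since $g=D^{|\rho|-1}f_{i}$ with $|\rho|-1\le[p]-1<\gamma-1$, the map $g$ is $Lip(\gamma-|\rho|)$ with $\gamma-|\rho|>0$, so the high-order Lipschitz Taylor estimate of \cite[Definition 1.2.4]{lyons1998differential} bounds the remainder by $\|f\|_{Lip(\gamma-1)}\,|x_{t}-x_{s}|^{([p]-|\rho|+1)\wedge(\gamma-|\rho|)}$ (the first exponent when the truncation lies below top order, the second at top order). Combining with $|x_{t}-x_{s}|\le\|X_{s,t}\|\le\omega(s,t)^{1/p}$ yields a bound $\|f\|_{Lip(\gamma-1)}\,\omega(s,t)^{\frac{([p]-|\rho|+1)\wedge(\gamma-|\rho|)}{p}}$. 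Finally I would check that the exponent matches the required $\theta-\frac{|\rho|}{p}$: with $\theta=\frac{\gamma\wedge([p]+1)}{p}$ one has $([p]+1-|\rho|)\wedge(\gamma-|\rho|)=\big(\gamma\wedge([p]+1)\big)-|\rho|$, so the powers of $\omega$ coincide. Taking the maximum over $\rho$ (the non-tree components vanish in both terms) and adding the supremum term, with the symmetry factors and the structure of the $Lip$ norm keeping the constant at $1$, gives $\|\beta\|_{\theta}\le\|f\|_{Lip(\gamma-1)}$.

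The main obstacle I expect is the combinatorial identity in the second paragraph: proving that, for an arbitrary depth-$2$ trunk with possibly repeated leaf labels, the admissible-cut multiplicities divided by $\sigma(\tau)$ collapse precisely to $\frac{1}{\sigma(\rho)\,q!}$ and thereby convert the algebraic coproduct expansion into the analytic Taylor series. Everything else is either the structural support statement or a routine application of the high-order Lipschitz Taylor estimate together with the elementary exponent arithmetic.
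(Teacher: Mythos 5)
Your proposal is correct and follows the same skeleton as the paper's proof: split $\beta\left(X_{t}\right)-\beta\left(X_{s}\right)_{X_{s,t}}$ into an algebraic part, in which the Taylor polynomial of $f_{\tau}$ at $x_{s}$ must cancel the parallel-translation terms, plus a Taylor remainder, which is then estimated by $\left\Vert f\right\Vert _{Lip\left(\gamma-1\right)}\omega\left(s,t\right)^{\left(\gamma\wedge\left(\left[p\right]+1\right)-\left\vert\rho\right\vert\right)/p}$; your exponent arithmetic, the supremum bound, and the observation that only depth-$\leq 2$ tree components survive all agree with the paper. The one genuine divergence is how the cancellation (the paper's identity $B=C$) is established. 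The paper does not count cuts by hand: it invokes the duality between the Connes--Kreimer coproduct and the Grossman--Larson product with symmetry factors, $\left(X_{s,t}a,\tau\right)=\sum_{\rho_{1},\rho_{2}}\frac{\sigma\left(\tau\right)}{\sigma\left(\rho_{1}\right)\sigma\left(\rho_{2}\right)}\left(\rho_{1}\ast\rho_{2},\tau\right)\left(X_{s,t},\rho_{1}\right)\left(a,\rho_{2}\right)$ from \cite[Theorem 43]{foissy2002algebresb} and \cite[Proposition 4.4]{hoffman2003combinatorics}, together with $\sum_{\tau}\left(\rho_{1}\ast\rho_{2},\tau\right)f_{\tau}=D^{\rho_{1}}f_{\rho_{2}}$, nonzero only when $\rho_{2}$ is a tree of depth at most $2$ and $\rho_{1}$ has depth at most $1$. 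Your route replaces this citation by direct enumeration of admissible cuts, and the ``main obstacle'' you flag does close exactly as you sketch, precisely because only corollas matter: an admissible cut of $\tau=\left[\bullet_{j_{1}}\cdots\bullet_{j_{r}}\bullet_{l_{1}}\cdots\bullet_{l_{q}}\right]_{i}$ is just a subset of root edges, so if a label $a$ occurs $m_{a}$ times among the leaves of $\tau$ and $q_{a}$ of them are pruned, the number of cuts producing the pair $\left(\lambda,\rho\right)$ is $\prod_{a}\binom{m_{a}}{q_{a}}=\sigma\left(\tau\right)/\left(\sigma\left(\rho\right)\sigma\left(\lambda\right)\right)$; converting forest-indexed sums to ordered-sequence sums via the factor $q!/\sigma\left(\lambda\right)$ yields exactly your target weight $1/\left(\sigma\left(\rho\right)q!\right)$, so the identity holds for arbitrary repeated labels, not just small trees. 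As for what each approach buys: yours is elementary and self-contained, but it hinges on $f_{\tau}$ vanishing on trees of depth greater than $2$ so that all relevant trees are corollas; the paper's appeal to the Grossman--Larson pairing requires no such case analysis and is reused later (the same citations identify $\overline{a}$ as a grouplike element in the proof of Theorem \ref{Theorem integral of branched rough paths}), which is presumably why the authors route through it.
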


$\beta \left( X_{t}\right) \left( a\right) $ at $\left( \ref{Definition beta
first level one-form}\right) $ only involves the tree components of $a$,
which is important for the definition of iterated integral in $\left( \ref%
{definition of Y bracket sigma}\right) $ below.

\begin{definition}
\label{Definition of rough integral}With $\beta $ defined at $\left( \ref%
{Definition beta first level one-form}\right) $, denote $y_{t}=\left(
y_{t}^{1},\cdots ,y_{t}^{e}\right) :=\int_{0}^{t}\beta \left( X_{r}\right)
dX_{r}$ for $t\in \left[ 0,T\right] $. Define $Y:\left\{ \left( s,t\right)
|0\leq s<t\leq T\right\} \rightarrow G_{e}^{\left[ p\right] }$ inductively
as $\left( Y_{s,t},\bullet _{j}\right) :=y_{t}^{j}-y_{s}^{j}$ for $j=1,\dots
,e$; for a forest $\rho \in F_{e}$, $\left\vert \rho \right\vert =1,\dots ,%
\left[ p\right] -1$ and $j=1,\dots ,e$,
\begin{equation}
\left( Y_{s,t},\left[ \rho \right] _{j}\right) :=\int_{r=s}^{t}\left(
Y_{s,r},\rho \right) dy_{r}^{j}\text{;}
\label{definition of Y bracket sigma}
\end{equation}%
for trees $\tau _{j}\in T_{e}^{\left[ p\right] }$, $j=1,\dots ,l$, $%
\left\vert \tau _{1}\right\vert +\cdots +\left\vert \tau _{l}\right\vert
\leq \left[ p\right] $,
\begin{equation*}
\left( Y_{s,t},\tau _{1}\cdots \tau _{l}\right) :=\left( Y_{s,t},\tau
_{1}\right) \cdots \left( Y_{s,t},\tau _{l}\right) .
\end{equation*}%
We call $Y$ the rough integral of $f$ along $X$ and denote $Y_{s,t}$ as $%
\int_{s}^{t}f\left( X_{r}\right) dX_{r}$.
\end{definition}

The existence of $Y$ follows from the stability of effects under
multiplication (Lemma \ref{Lemma effects are stable under multiplication})
and iterated integration (Lemma \ref{Lemma effects are stable under iterated
integral}). That $Y_{s,u}Y_{u,t}=Y_{s,t}$ for $s<u<t$ in $\left[ 0,T\right] $
can be proved by induction on the degree of forests, using the equality
\begin{equation*}
\bigtriangleup \left[ \rho \right] _{j}=\left[ \rho \right] _{j}\otimes
1+\sum_{\bigtriangleup \rho }\rho _{\left( 1\right) }\otimes \left[ \rho
_{\left( 2\right) }\right] _{j}
\end{equation*}%
for a forest $\rho \in F_{e},\left\vert \rho \right\vert =0,\dots ,\left[ p%
\right] -1$ and $j\in \left\{ 1,\dots ,e\right\} $.

Let $\pi _{1}:G_{d}^{\left[ p\right] }\rightarrow
\mathbb{R}
^{d}\ $denote the projection to degree-$1$ components. Define the $d_{p}$%
-metric between branched $p$-rough paths $X^{i}:\left[ 0,T\right]
\rightarrow G_{d}^{\left[ p\right] },i=1,2$ as%
\begin{eqnarray*}
d_{p}\left( X^{1},X^{2}\right) &:=&\left\vert \pi _{1}\left(
X_{0}^{1}\right) -\pi _{1}\left( X_{0}^{2}\right) \right\vert \\
&&+\max_{\rho \in F_{d}^{\left[ p\right] }}\sup_{D\subset \left[ 0,T\right]
}\left( \sum_{k=0}^{n-1}\left\vert \left( X_{t_{k},t_{k+1}}^{1},\rho \right)
-\left( X_{t_{k},t_{k+1}}^{2},\rho \right) \right\vert ^{\frac{p}{\left\vert
\rho \right\vert }}\right) ^{\frac{1}{p}}
\end{eqnarray*}%
where the supremum is taken over all finite partitions of $\left[ 0,T\right]
$.

\begin{theorem}
\label{Theorem integral of branched rough paths}The rough integral $%
\int_{0}^{\cdot }f\left( X_{r}\right) dX_{r}:\left[ 0,T\right] \rightarrow
G_{e}^{\left[ p\right] }$ in Definition \ref{Definition of rough integral}
exists and is continuous with respect to $X$ in $d_{p}$-metric. Moreover,
there exists a constant $C$ depending only on $p,\gamma ,d,e$ such that for
all $s<t$ in $\left[ 0,T\right] $,%
\begin{equation*}
\left\Vert \int f\left( X_{r}\right) dX_{r}\right\Vert _{p-var,\left[ s,t%
\right] }\leq C\left\Vert f\right\Vert _{Lip\left( \gamma -1\right) }\left(
\left\Vert X\right\Vert _{p-var,\left[ s,t\right] }\vee \left\Vert
X\right\Vert _{p-var,\left[ s,t\right] }^{p}\right) .
\end{equation*}
\end{theorem}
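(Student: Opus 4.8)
The plan is to treat the three assertions—existence as a branched $p$-rough path, the quantitative bound, and continuity in $d_p$—separately, with the quantitative estimate carrying the main weight. For existence I would first record that each iterated integral in $(\ref{definition of Y bracket sigma})$ converges as a limit of Riemann sums because the integrand $r\mapsto(Y_{s,r},\rho)$ is itself an effect of $X$, which is exactly what the stability lemmas (Lemma \ref{Lemma effects are stable under multiplication} and Lemma \ref{Lemma effects are stable under iterated integral}) provide; combined with the multiplicativity $Y_{s,u}Y_{u,t}=Y_{s,t}$ already deduced from the coproduct identity for $[\rho]_j$, this shows $Y$ takes values in $G_e^{[p]}$, so that once the $p$-variation bound is in hand, $Y$ is a branched $p$-rough path.

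The core is the quantitative bound, and the crucial constraint is that the constant must depend only on $p,\gamma,d,e$, whereas the increment estimate of Lemma \ref{Lemma estimate of increments of effects} carries a constant depending on $\omega(0,T)$. I would remove this dependence by localizing to the scale $\omega(s,t)\le 1$, where $\omega(s,t):=\|X\|_{p-var,[s,t]}^p$. On such intervals all powers $\omega(s,t)^{m/p}$, $m=1,\dots,[p]$, are dominated by $\omega(s,t)^{1/p}$, and re-running Young's ``drop a point'' argument produces a universal constant (of the type $(1-2^{1-\theta})^{-1}$, with $\theta=\tfrac{\gamma\wedge([p]+1)}{p}>1$ from Lemma \ref{Lemma beta is a slowly varying one-form}). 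Using $\|\beta\|_\theta\le\|f\|_{Lip(\gamma-1)}$, the first level obeys $|y_t-y_s|\le C_{p,\gamma,d}\,\|f\|_{Lip(\gamma-1)}\,\omega(s,t)^{1/p}$ on these intervals.

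Next I would prove, by induction on $|\tau|$ and still under $\omega(s,t)\le 1$, that
\[
|(Y_{s,t},\tau)|\le C\,\|f\|_{Lip(\gamma-1)}^{|\tau|}\,\omega(s,t)^{|\tau|/p}\qquad(\tau\in T_e^{[p]}),
\]
with $C=C(p,\gamma,d,e)$; the base case is the first-level bound, and for $\tau=[\rho]_j$ I would feed the inductive bound on $(Y_{s,r},\rho)$ into $(Y_{s,t},[\rho]_j)=\int_s^t(Y_{s,r},\rho)\,dy_r^j$, treating the integrand as an effect that vanishes at $r=s$ and applying the sewing estimate once more, with forests handled multiplicatively. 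Taking the maximum over $\rho$ gives $\|Y_{s,t}\|\le C\|f\|_{Lip(\gamma-1)}\,\omega(s,t)^{1/p}$ whenever $\omega(s,t)\le1$. For a general interval I would use a greedy partition into $N\lesssim\omega(s,t)+1$ subintervals with $\omega\le1$; the product bound (for $\omega>1$), via sub-additivity of the homogeneous norm with constant depending only on $p,d,e$, together with the clean single-interval bound (for $\omega\le1$), gives $\|Y_{s,t}\|\le C\|f\|_{Lip(\gamma-1)}\,(\omega(s,t)^{1/p}\vee\omega(s,t))$, hence $\|Y_{s,t}\|^p\le C\|f\|_{Lip(\gamma-1)}^p(\omega\vee\omega^p)$. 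Since $\omega$ and $\omega^p$ are both super-additive, $\omega\vee\omega^p$ is a control, so $\|Y\|_{p-var,[s,t]}^p\le C\|f\|_{Lip(\gamma-1)}^p(\omega(s,t)\vee\omega(s,t)^p)$; taking $p$-th roots and recalling $\omega(s,t)=\|X\|_{p-var,[s,t]}^p$ yields exactly the claimed $\|X\|_{p-var}\vee\|X\|_{p-var}^p$ factor.

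Finally, continuity in $d_p$ I would obtain by running the same localize-and-induct scheme on the difference of the integrals $Y^1,Y^2$ associated with $X^1,X^2$: the $Lip(\gamma-1)$ regularity of $f$ controls $|f_\tau(x^1)-f_\tau(x^2)|$, hence the difference of one-forms $\beta^1-\beta^2$, in terms of $|\pi_1(X_0^1)-\pi_1(X_0^2)|$ and $d_p(X^1,X^2)$, and feeding these into a difference version of the sewing estimate and the tree induction bounds $d_p(Y^1,Y^2)$ by $d_p(X^1,X^2)$ uniformly on balls of bounded $p$-variation. I expect the main obstacle to be the inductive step of the quantitative bound: because $m+1\le[p]\le p$, the integrals $\int(Y_{s,r},\rho)\,dy_r^j$ fail Young's convergence criterion, so their estimates cannot come from Young's theorem but must be extracted from the effect structure, and the delicate point is re-deriving the sewing bound at each level while keeping the accumulated constant dependent only on $p,\gamma,d,e$ and producing the exact power $\omega^{|\tau|/p}$.
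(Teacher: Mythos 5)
Your strategy tracks the paper's own proof quite closely: existence via the stability lemmas and the coproduct identity, a tree-level induction that combines the sewing estimate (Lemma \ref{Lemma estimate of increments of effects}) with the one-form bounds from Lemmas \ref{Lemma effects are stable under multiplication} and \ref{Lemma effects are stable under iterated integral}, localization to $\omega(s,t)\leq 1$ to free the constant of $\omega(0,T)$, and then chaining together with super-additivity of $\omega\vee\omega^{p}$. The paper packages your induction as a comparison of $Y$ with the explicit approximation $\widetilde{Y}$ of Definition \ref{Definition of Y tilde}; your one-step local expansion is exactly $(\widetilde{Y}_{s,t},\rho)=\beta_{s}^{\rho}\left( X_{s}\right) \left( X_{s,t}\right) $, so this part is the same argument in different clothing. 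Where you genuinely deviate is continuity: the paper reads it off from the explicit formula for $\widetilde{Y}$ (whose coefficients are derivatives of $f$ at $\pi _{1}\left( X_{s}\right) $, hence at least $\left( \gamma -\left[ p\right] \right) $-H\"{o}lder) combined with the uniqueness/continuity machinery for almost multiplicative functionals, while you propose direct difference estimates; that route is viable but heavier, and since $D^{\left[ p\right] -1}f$ is only H\"{o}lder you can only expect a modulus of continuity for $d_{p}\left( Y^{1},Y^{2}\right) $ in terms of $d_{p}\left( X^{1},X^{2}\right) $, not the Lipschitz-type bound your phrasing suggests.

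The genuine gap is in the chaining step, where you invoke ``sub-additivity of the homogeneous norm with constant depending only on $p,d,e$''. The defining norm $\left\Vert a\right\Vert =\max_{\rho }\left\vert \left( a,\rho \right) \right\vert ^{1/\left\vert \rho \right\vert }$ on $G_{e}^{\left[ p\right] }$ only satisfies quasi-sub-additivity $\left\Vert ab\right\Vert \leq C\left( \left\Vert a\right\Vert +\left\Vert b\right\Vert \right) $ with $C>1$, and with such an inequality the greedy chaining over $N\approx \omega \left( s,t\right) $ subintervals accumulates a factor growing with $N$ (exponentially for sequential bracketing, polynomially for balanced bracketing), so the resulting constant depends on $\omega \left( 0,T\right) $, or equivalently the power of $\omega $ degrades --- exactly what the theorem forbids. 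What is needed is a norm satisfying the \emph{genuine} triangle inequality and equivalent to the defining one, so that the equivalence constant enters once rather than per chaining step; this is not a classical fact for the Butcher group. The paper obtains it by mapping $a\mapsto \overline{a}$ with $\left( \overline{a},\rho \right) =\left( a,\rho \right) /\sigma \left( \rho \right) $, which by \cite[Theorem 43]{foissy2002algebresb}\cite[Proposition 4.4]{hoffman2003combinatorics} lands in the grouplike elements of the Grossman Larson Hopf algebra, where \cite{yang2022remainder} supplies a sub-additive continuous homogeneous norm and the equivalence of all such norms, after which \cite[Proposition 5.10]{friz2010multidimensional} applies; your proposal is missing this input. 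A smaller slip in the same step: the maximum of two controls is not in general super-additive, so ``$\omega $ and $\omega ^{p}$ are controls, hence $\omega \vee \omega ^{p}$ is a control'' is not a valid inference --- the conclusion is true for this particular pair, but it needs the case analysis on whether the subinterval values exceed $1$ (or simply quote the cited Friz--Victoir proposition, which contains it).
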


Recall Notation \ref{Notation Bd[p]} that $\left\{ \nu _{1},\nu _{2},\cdots
,\nu _{K}\right\} $ is the set of labelled trees that freely generate the
step-$\left[ p\right] $ truncated labelled Grossman Larson algebra. Based on
\cite[Theorem 4.3]{boedihardjo2019isomorphism}, there exists a $\Pi $-rough
path $Z$ that is isomorphic to branched rough path $X$. The integral of $\Pi
$-rough path is defined in \cite[Definition 3.10]{gyurko2016differential}.

\begin{proposition}
\label{Proposition same first level integrals}Suppose $Z$ is the $\Pi $%
-rough path that is isomorphic to $X$. Define $g=\left( g_{1},\cdots
,g_{K}\right) :%
\mathbb{R}
^{d}\rightarrow L\left(
\mathbb{R}
^{K},%
\mathbb{R}
^{e}\right) $ as $g_{k}:=f_{\nu _{k}},k=1,\dots ,K$ with $f_{\nu _{k}}$
defined in Notation \ref{Notation f tau in integrals}. Then for any $s<t$ in
$\left[ 0,T\right] $ the first level integral of $f$ along $X$ on $\left[ s,t%
\right] $ is equal to the first level integral of $g$ along $Z$ on $\left[
s,t\right] $.
\end{proposition}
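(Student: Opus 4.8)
The plan is to reduce the claimed identity to a termwise comparison of the two defining Riemann sums and then invoke uniqueness of the Young-type limit. Both first-level integrals are $\mathbb{R}^{e}$-valued and are, by construction, limits over partitions $D=\{t_{k}\}$ of $[s,t]$ with $\left\vert D\right\vert \to 0$. On the branched side, the first level of $\int_{s}^{t}f\left( X_{r}\right) dX_{r}$ equals $y_{t}-y_{s}=\int_{s}^{t}\beta \left( X_{r}\right) dX_{r}$, whose defining increment is
\[
\beta \left( X_{t_{k}}\right) \left( X_{t_{k},t_{k+1}}\right) =\sum_{\tau \in T_{d},\,\left\vert \tau \right\vert =1,\dots ,[p]}f_{\tau }\left( x_{t_{k}}\right) \,\frac{\left( X_{t_{k},t_{k+1}},\tau \right) }{\sigma \left( \tau \right) },
\]
in which, by Notation \ref{Notation f tau in integrals}, only trees of depth at most $2$ contribute and $f_{\tau }$ is either $f_{i}$ or a derivative $D^{l}f_{i}$. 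On the $\Pi $ side I would first record the explicit form of the first-level integral of $g$ along $Z$ from Gyurk\'{o} \cite[Definition 3.10]{gyurko2016differential}: it is the limit of compensated Riemann sums whose increments are the Taylor jet of $g$ at $z_{t_{k}}$, with derivatives taken along the degree-$1$ letters, contracted against the word-components $\left( Z_{t_{k},t_{k+1}},w\right) $ for $\left\vert w\right\vert \leq [p]$.

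Next I would translate the $\Pi $-increment into the language of forests using the isomorphism. Since the degree-$1$ generators among $\nu _{1},\dots ,\nu _{K}$ are exactly the single vertices $\bullet _{1},\dots ,\bullet _{d}$, the level-$1$ projection $z$ of $Z$ coincides with the projection $x$ of $X$, so the two Taylor expansions are taken in the same directions and at the same base points. By Boedihardjo--Chevyrev \cite[Theorem 4.3]{boedihardjo2019isomorphism}, the word-component $\left( Z_{s,t},k_{1}\cdots k_{m}\right) $ is the Connes--Kreimer/Grossman--Larson pairing of $X_{s,t}$ with the Grossman--Larson product $\nu _{k_{1}}\cdots \nu _{k_{m}}$ of the corresponding free generators. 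I would then expand each such product into a linear combination of forests via the grafting rule of the Grossman--Larson algebra, thereby rewriting the $\Pi $-increment as a sum over forests $\tau $ of $X_{t_{k},t_{k+1}}$-components, with coefficients produced jointly by the shuffle/factorial weights of the Taylor jet and the grafting multiplicities.

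The combinatorial heart of the proof, and the step I expect to be the main obstacle, is to verify that after this expansion the $\Pi $-increment agrees with $\beta \left( X_{t_{k}}\right) \left( X_{t_{k},t_{k+1}}\right) $ up to terms of order $\omega \left( t_{k},t_{k+1}\right) ^{\theta }$ with $\theta >1$. Concretely, I must match the symmetry-factor weights $1/\sigma \left( \tau \right) $ on the branched side against the product of the Taylor coefficients and grafting multiplicities on the $\Pi $ side, and check that the derivative structure $f_{\tau }=D_{i_{1},\dots ,i_{l}}^{l}f_{i}$ is reproduced exactly: the vanishing of $f_{\tau }$ for trees of depth greater than $2$ should correspond precisely to the fact that only degree-$1$ letters, namely the single vertices $\bullet _{i_{1}},\dots ,\bullet _{i_{l}}$, are differentiated, while any generator $\nu _{k}$ that is itself a deeper tree contributes $g_{k}=f_{\nu _{k}}=0$. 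Here the freeness of the Grossman--Larson algebra and the definition of $\sigma $ at \eqref{definition of symmetry factor} must be used in tandem, and one must confirm that the forest-components produced by grafting which carry no matching derivative term are of higher $\omega$-order and do not spoil the identification.

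Finally I would conclude. Both Riemann sums converge, since the branched first-level integral exists by Lemma \ref{Lemma estimate of increments of effects} and the $\Pi $-integral exists by Gyurk\'{o}'s theory. Having shown that their increments differ by terms bounded by $C\left\Vert f\right\Vert _{Lip\left( \gamma -1\right) }\,\omega \left( t_{k},t_{k+1}\right) ^{\theta }$ with $\theta >1$, where $\omega \left( s,t\right) =\left\Vert X\right\Vert _{p-var,[s,t]}^{p}$ is a control, the total discrepancy over any partition is at most $C\left\Vert f\right\Vert _{Lip\left( \gamma -1\right) }\sum_{k}\omega \left( t_{k},t_{k+1}\right) ^{\theta }\leq C\left\Vert f\right\Vert _{Lip\left( \gamma -1\right) }\,\omega \left( 0,T\right) \,\bigl( \max_{k}\omega \left( t_{k},t_{k+1}\right) \bigr) ^{\theta -1}$, which tends to $0$ as $\left\vert D\right\vert \to 0$ by continuity of $X$. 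Hence the two limits coincide on every $[s,t]$, which is the assertion.
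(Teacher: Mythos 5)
Your skeleton --- termwise comparison of the two Riemann sums followed by a Young-type limit argument --- is sound, and it is also how the paper concludes; but your proposal stops exactly where the content of the proposition lies. The identity you defer to ``the combinatorial heart,'' namely that the branched increment $\sum_{\tau \in T_{d}^{\left[ p\right] }}f_{\tau }\left( x_{t_{j}}\right) \left( X_{t_{j},t_{j+1}},\tau \right) /\sigma \left( \tau \right) $ agrees (exactly, or up to an error of order $\omega \left( t_{j},t_{j+1}\right) ^{\theta }$ with $\theta >1$) with the Gyurk\'{o} increment $\sum_{k=1}^{K}\sum_{i_{1},\dots ,i_{l}}D_{i_{1},\cdots ,i_{l}}^{l}f_{\nu _{k}}\left( x_{t_{j}}\right) \left( Z_{t_{j},t_{j+1}},i_{1}\cdots i_{l}k\right) $, is never established: you list what would have to be matched (symmetry factors against Taylor weights and grafting multiplicities, the vanishing of $f_{\tau }$ on trees of depth greater than $2$), but you give no argument that this bookkeeping closes. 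That identity is precisely display $\left( \ref{inner same first level taylor expansion}\right) $ in the paper, and proving it \emph{is} the proposition; everything before and after it in your write-up is routine. There is also a trap in your translation step. Under the correspondence of \cite{boedihardjo2019isomorphism}, the component $\left( Z_{s,t},k_{1}\cdots k_{m}\right) $ is obtained by expanding the grouplike Grossman--Larson element associated with $X_{s,t}$ (whose forest components are $\left( X_{s,t},\rho \right) /\sigma \left( \rho \right) $) in the linear basis of monomials $\nu _{k_{1}}\ast \cdots \ast \nu _{k_{m}}$, i.e.\ by pairing against the \emph{dual} basis in the Connes--Kreimer algebra; this is not the same as pairing $X_{s,t}$ against the forest expansion of $\nu _{k_{1}}\ast \cdots \ast \nu _{k_{m}}$ itself, as you assert. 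Since the Connes--Kreimer/Grossman--Larson pairing is diagonal with weights given by symmetry factors, the two readings differ by exactly the $\sigma $-bookkeeping you need to control, so even a careful execution of your plan could start from the wrong formula.

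For comparison, the paper's proof avoids this combinatorics altogether. Fixing the base point $s$, it replaces $f$ by its degree-$\left( \left[ p\right] -1\right) $ Taylor polynomial at $x_{s}$, Whitney-extends it to a $Lip\left( \gamma \right) $ one-form with $\gamma >p$, and considers two auxiliary rough differential equations: $dw=F\left( w\right) dX$ with $F\left( x,y\right) \left( v\right) =\left( v,p\left( x\right) \left( v\right) \right) $, and $d\widetilde{w}=\widetilde{F}\left( \widetilde{w}\right) dZ$ with $\widetilde{F}$ assembled from the elementary differentials $F\left( \nu _{k}\right) $. By \cite[Proposition 3.16]{yang2022remainder} the two solutions have the same step-$\left[ p\right] $ Taylor expansion, and reading off the $y$-components, using that $\left( D^{k}p\right) \left( x_{s}\right) =\left( D^{k}f\right) \left( x_{s}\right) $ for $k\leq \left[ p\right] -1$, yields $\left( \ref{inner same first level taylor expansion}\right) $ as an exact identity, with no error term to estimate. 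To repair your proposal you must either carry out the free-generator expansion with all weights, starting from the correct coefficient formula for $Z$, or reproduce an argument of this RDE/Taylor-expansion type; as written, it is a plan rather than a proof.
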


\section{Proofs}

For $s<t$ in $\left[ 0,T\right] $, denote $\omega \left( s,t\right)
:=\left\Vert X\right\Vert _{p-var,\left[ s,t\right] }^{p}$. Assume $\gamma
\in \left( p,\left[ p\right] +1\right] $.

\begin{proof}[Proof of Lemma \protect\ref{Lemma beta is a slowly varying
one-form}]
For $s<t$ in $\left[ 0,T\right] $ and $a\in G_{d}^{\left[ p\right] }$,%
\begin{eqnarray*}
&&\left( \beta \left( X_{t}\right) -\beta \left( X_{s}\right)
_{X_{s,t}}\right) \left( a\right) \\
&=&\beta \left( X_{t}\right) \left( a\right) -\beta \left( X_{s}\right)
\left( X_{s,t}a\right) +\beta \left( X_{s}\right) \left( X_{s,t}\right) \\
&=&\sum_{\tau \in T_{d}^{\left[ p\right] }}\frac{1}{\sigma \left( \tau
\right) }\left( f_{\tau }\left( x_{t}\right) -\sum_{\substack{ i_{j}\in
\left\{ 1,\dots ,d\right\}  \\ l\in \left\{ 0,\dots ,\left[ p\right]
-\left\vert \tau \right\vert \right\} }}\frac{D_{i_{1},\cdots
,i_{l}}^{l}f_{\tau }\left( x_{s}\right) }{l!}\left( X_{s,t},\bullet
_{i_{1}}\cdots \bullet _{i_{l}}\right) \right) \left( a,\tau \right) \\
&&+\sum_{\tau \in T_{d}^{\left[ p\right] }}\frac{1}{\sigma \left( \tau
\right) }\sum_{\substack{ i_{j}\in \left\{ 1,\dots ,d\right\}  \\ l\in
\left\{ 0,\dots ,\left[ p\right] -\left\vert \tau \right\vert \right\} }}%
\frac{D_{i_{1},\cdots ,i_{l}}^{l}f_{\tau }\left( x_{s}\right) }{l!}\left(
X_{s,t},\bullet _{i_{1}}\cdots \bullet _{i_{l}}\right) \left( a,\tau \right)
\\
&&-\sum_{\tau \in T_{d}^{\left[ p\right] }}\frac{f_{\tau }\left(
x_{s}\right) }{\sigma \left( \tau \right) }\left( \left( X_{s,t}a,\tau
\right) -\left( X_{s,t},\tau \right) \right) \\
&=:&A+B-C
\end{eqnarray*}

Then we prove that $B=C$. Denote the set of $\left\{ 1,\dots ,d\right\} $%
-labelled forests of depth less or equal to $1$ (including the empty forest)
as $S_{d}$. Since the number of ordered sequences $i_{1}\cdots i_{l}$ that
correspond to the same forest $\rho =\bullet _{i_{1}}\cdots \bullet _{i_{l}}$
is $l!/\sigma \left( \rho \right) $ where $\sigma \left( \rho \right) $ is
the symmetry factor of $\rho $ defined at $\left( \ref{definition of
symmetry factor}\right) $. For the empty forest $\epsilon $, denote $%
D^{\epsilon }f_{\tau }:=f_{\tau }$. For $\rho =\bullet _{i_{1}}\cdots
\bullet _{i_{l}}$, denote $D^{\rho }f_{\tau }:=D_{i_{1},\cdots
,i_{l}}^{l}f_{\tau }$, where the derivative is independent of the order of
differentiation. Then%
\begin{equation*}
B=\sum_{\tau \in T_{d}^{\left[ p\right] }}\sum_{\rho \in S_{d},\left\vert
\rho \right\vert \leq \left[ p\right] -\left\vert \tau \right\vert }\frac{%
D^{\rho }f_{\tau }\left( x_{s}\right) }{\sigma \left( \tau \right) \sigma
\left( \rho \right) }\left( X_{s,t},\rho \right) \left( a,\tau \right) .
\end{equation*}%
Let $\ast $ denote the product of Grossman Larson Hopf algebra \cite[$\left(
3.1\right) $]{grossman1989hopf}. Based on \cite[Theorem 43]%
{foissy2002algebresb}\cite[Proposition 4.4]{hoffman2003combinatorics},
\begin{equation*}
\left( X_{s,t}a,\tau \right) =\sum_{\rho _{i}\in F_{d},\left\vert \rho
_{i}\right\vert =0,\dots ,\left\vert \tau \right\vert }\frac{\sigma \left(
\tau \right) }{\sigma \left( \rho _{1}\right) \sigma \left( \rho _{2}\right)
}\left( \rho _{1}\ast \rho _{2},\tau \right) \left( X_{s,t},\rho _{1}\right)
\left( a,\rho _{2}\right) .
\end{equation*}%
Since $f_{\tau }=0$ when the depth of $\tau $ is greater than $2$, we have,
for forests $\rho _{i}$, $i=1,2$ with $\left\vert \rho _{2}\right\vert \geq
1 $,
\begin{equation*}
\sum_{\tau \in T_{d}^{\left[ p\right] }}\left( \rho _{1}\ast \rho _{2},\tau
\right) f_{\tau }=D^{\rho _{1}}f_{\rho _{2}},
\end{equation*}%
which can be non-zero only if $\rho _{2}$ is a tree of depth $1$ or $2$ and $%
\rho _{1}\in S_{d}$. Then%
\begin{eqnarray*}
C &=&\sum_{\tau \in T_{d}^{\left[ p\right] }}\frac{f_{\tau }\left(
x_{s}\right) }{\sigma \left( \tau \right) }\sum_{\rho _{2}\in T_{d}^{\left[ p%
\right] }}\sum_{\rho _{1}\in S_{d}}\frac{\sigma \left( \tau \right) }{\sigma
\left( \rho _{1}\right) \sigma \left( \rho _{2}\right) }\left( \rho _{1}\ast
\rho _{2},\tau \right) \left( X_{s,t},\rho _{1}\right) \left( a,\rho
_{2}\right) \\
&=&\sum_{\rho _{2}\in T_{d}^{\left[ p\right] }}\sum_{\rho _{1}\in
S_{d},\left\vert \rho _{1}\right\vert \leq \left[ p\right] -\left\vert \rho
_{2}\right\vert }\frac{D^{\rho _{1}}f_{\rho _{2}}\left( x_{s}\right) }{%
\sigma \left( \rho _{1}\right) \sigma \left( \rho _{2}\right) }\left(
X_{s,t},\rho _{1}\right) \left( a,\rho _{2}\right) \\
&=&B
\end{eqnarray*}

To estimate $A$, since $f$ is $Lip\left( \gamma -1\right) $, based on the
Taylor expansion of $f$ and that $\sigma \left( \tau \right) \geq 1$, we
have, $\left\Vert \beta \left( X_{t}\right) \right\Vert \leq \left\Vert
f\right\Vert _{Lip\left( \gamma -1\right) }$ and for every $\rho \in F_{d}^{%
\left[ p\right] }$,%
\begin{equation*}
\left\Vert \beta \left( X_{t}\right) -\beta \left( X_{s}\right)
_{X_{s,t}}\right\Vert _{\rho }\leq \left\Vert f\right\Vert _{Lip\left(
\gamma -1\right) }\omega \left( s,t\right) ^{\frac{\gamma -\left\vert \rho
\right\vert }{p}}.
\end{equation*}%
As a result, $\left\Vert \beta \right\Vert _{\frac{\gamma }{p}}\leq
\left\Vert f\right\Vert _{Lip\left( \gamma -1\right) }$.
\end{proof}

\begin{notation}
\label{Notation multiplication of linear maps}For $i=1,\cdots ,l$, assume $%
\phi _{i}:G_{d}^{\left[ p\right] }\rightarrow
\mathbb{R}
$, $\phi _{i}=\sum_{\rho _{i}\in F_{d}^{\left[ p\right] }}\phi _{i}^{\rho
_{i}}$ with $\phi _{i}^{\rho _{i}}\in
\mathbb{R}
$. Define $\phi _{1}\cdots \phi _{l}:G_{d}^{\left[ p\right] }\rightarrow
\mathbb{R}
$ as%
\begin{equation*}
\left( \phi _{1}\cdots \phi _{l}\right) \left( a\right) :=\sum_{\rho _{i}\in
F_{d}^{\left[ p\right] },\left\vert \rho _{1}\right\vert +\cdots +\left\vert
\rho _{l}\right\vert \leq \left[ p\right] }\phi _{1}^{\rho _{1}}\cdots \phi
_{l}^{\rho _{l}}\left( a,\rho _{1}\cdots \rho _{l}\right) \text{ for }a\in
G_{d}^{\left[ p\right] }.
\end{equation*}
\end{notation}

\begin{lemma}
\label{Lemma effects are stable under multiplication}Suppose $\beta
_{i},i=1,2$ are slowly-varying one-forms along $X$ taking values in $%
\mathbb{R}
$. Suppose for some $\theta \in \left( 1,\frac{\left[ p\right] +1}{p}\right]
$, $\left\Vert \beta _{i}\right\Vert _{\theta }<\infty $ for $i=1,2$. For $%
t\in \left[ 0,T\right] $ and $a\in G_{d}^{\left[ p\right] }$, define%
\begin{eqnarray*}
\beta \left( X_{t}\right) \left( a\right) &:=&\int_{0}^{t}\beta _{1}\left(
X_{r}\right) dX_{r}\beta _{2}\left( X_{t}\right) \left( a\right) +\beta
_{1}\left( X_{t}\right) \left( a\right) \int_{0}^{t}\beta _{2}\left(
X_{r}\right) dX_{r} \\
&&+\left( \beta _{1}\left( X_{t}\right) \beta _{2}\left( X_{t}\right)
\right) \left( a\right) .
\end{eqnarray*}%
Then $\beta $ is a slowly-varying one-form along $X$ taking values in $%
\mathbb{R}
$ and there exists a constant $C$ depending only on $p,d,\theta ,\omega
\left( 0,T\right) $ such that%
\begin{equation*}
\left\Vert \beta \right\Vert _{\theta }\leq C_{p,d,\theta ,\omega \left(
0,T\right) }\left\Vert \beta _{1}\right\Vert _{\theta }\left\Vert \beta
_{2}\right\Vert _{\theta }
\end{equation*}
\end{lemma}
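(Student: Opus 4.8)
The plan is to check the two contributions to $\|\beta\|_\theta$ separately: the uniform size $\sup_t\|\beta(X_t)\|$ and the increment $\|\beta(X_t)-\beta(X_s)_{X_{s,t}}\|_\rho$. Write $y^i_t:=\int_0^t\beta_i(X_r)\,dX_r$, so that $y^i_0=0$ and the second estimate of Lemma~\ref{Lemma estimate of increments of effects} gives $|y^i_t|\le C\|\beta_i\|_\theta\,\omega(0,T)^{1/p}$. The $\rho$-component of $\beta(X_t)$ is $y^1_t\beta_2(X_t)^\rho+y^2_t\beta_1(X_t)^\rho+\sum_{\sigma_1\sigma_2=\rho}\beta_1(X_t)^{\sigma_1}\beta_2(X_t)^{\sigma_2}$, the last sum running over the finitely many forest factorisations of $\rho$ allowed by Notation~\ref{Notation multiplication of linear maps}; since $|\beta_i(X_t)^\sigma|\le\|\beta_i\|_\theta$, this already yields $\sup_t\|\beta(X_t)\|\le C_{p,d,\omega(0,T)}\|\beta_1\|_\theta\|\beta_2\|_\theta$. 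Everything then reduces to the increment bound.

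For the increment I would expand $\beta(X_t)-\beta(X_s)_{X_{s,t}}$ using bilinearity of the product one-form and $\phi_b(a)=\phi(ba)-\phi(b)$ into three groups, one per summand of $\beta$. It helps to abbreviate $P_i:=\beta_i(X_s)(X_{s,t})$ (a scalar), $Q_i:=\beta_i(X_s)_{X_{s,t}}$ and $\Delta_i:=\beta_i(X_t)-\beta_i(X_s)_{X_{s,t}}$ (one-forms), and $R_i:=(y^i_t-y^i_s)-P_i$, so that $\beta_i(X_t)=Q_i+\Delta_i$ and $y^i_t-y^i_s=P_i+R_i$. Lemma~\ref{Lemma estimate of increments of effects} gives $|R_i|\le C\|\beta_i\|_\theta\,\omega(s,t)^\theta$, the slowly-varying property of $\beta_i$ gives $\|\Delta_i\|_\sigma\le\|\beta_i\|_\theta\,\omega(s,t)^{\theta-|\sigma|/p}$, and $Q_i=\beta_i(X_t)-\Delta_i$ gives $\|Q_i\|_\sigma\le C\|\beta_i\|_\theta$. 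Substituting into the first two groups, the scalar$\times$one-form pieces $P_1Q_2$ and $P_2Q_1$ appear with a plus sign; each is only of order $\omega(s,t)^{1/p}$ and so is individually too large for the target exponent $\theta-|\rho|/p$.

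The decisive step is the third group, the increment of the product one-form, for which I would establish the Leibniz rule for parallel translation $(\beta_1(X_s)\beta_2(X_s))_{X_{s,t}}=Q_1Q_2+P_1Q_2+P_2Q_1-\mathcal C$, where $Q_1Q_2$ denotes the product one-form and $\mathcal C$ is a correction that would be absent were it not for the degree truncation in Notation~\ref{Notation multiplication of linear maps}; it follows from the fact that the Connes Kreimer coproduct is an algebra morphism. Granting this, the $+P_1Q_2+P_2Q_1$ supplied by the third group cancel exactly the offending terms of the first two, and what remains is $R_iQ_j$, $y^i_t\Delta_j$, $Q_i\Delta_j$, $\Delta_1\Delta_2$, and $\mathcal C$. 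Each of the first four is bounded componentwise by $C\|\beta_1\|_\theta\|\beta_2\|_\theta\,\omega(s,t)^{\theta-|\rho|/p}$: for $R_iQ_j$ from $|R_i|\le C\|\beta_i\|_\theta\omega(s,t)^\theta$ and $\theta\ge\theta-|\rho|/p$; for $y^i_t\Delta_j$ from the bound on $|y^i_t|$ and $\|\Delta_j\|_\rho$; and for $Q_i\Delta_j$, $\Delta_1\Delta_2$ by summing over factorisations $\rho=\sigma_1\sigma_2$ and using $|\sigma_j|\le|\rho|$, absorbing the resulting powers of the bounded quantity $\omega(0,T)$ into the constant.

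The main obstacle is $\mathcal C$, and this is exactly where $\theta\le\frac{[p]+1}{p}$ enters. A bookkeeping of the Leibniz computation shows that $\mathcal C$ collects only the contributions coming from forest products $\rho_1\rho_2$ of total degree exceeding $[p]$, which the truncated multiplication discards; after the coproduct split each such contribution carries a factor $(X_{s,t},\cdot)$ of total degree $(|\rho_1|+|\rho_2|)-|\rho|\ge[p]+1-|\rho|$ together with one-form components bounded by $\|\beta_i\|_\theta$. Since $|(X_{s,t},\mu)|\le\omega(s,t)^{|\mu|/p}$, this gives $\|\mathcal C\|_\rho\le C\|\beta_1\|_\theta\|\beta_2\|_\theta\,\omega(s,t)^{([p]+1-|\rho|)/p}$, and $\theta\le\frac{[p]+1}{p}$ turns this into $\frac{[p]+1-|\rho|}{p}\ge\theta-\frac{|\rho|}{p}$, so $\mathcal C$ is of the required order. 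Collecting all pieces yields $\|\beta(X_t)-\beta(X_s)_{X_{s,t}}\|_\rho\le C_{p,d,\theta,\omega(0,T)}\|\beta_1\|_\theta\|\beta_2\|_\theta\,\omega(s,t)^{\theta-|\rho|/p}$, which with the uniform bound is the assertion. I expect the verification of this Leibniz-with-correction identity and the degree count controlling $\mathcal C$ to be the only genuinely delicate points.
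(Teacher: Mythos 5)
Your proposal is correct and takes essentially the same route as the paper: the paper's proof is exactly your decomposition, with your ``Leibniz rule with correction'' appearing as the displayed identity for $\left( \beta \left( X_{t}\right) -\beta \left( X_{s}\right) _{X_{s,t}}\right) \left( a\right) $ whose final sum (over $\left\vert \rho _{1}\right\vert +\left\vert \rho _{2}\right\vert \geq \left[ p\right] +1$ with $1\leq \left\vert \left( \rho _{1}\right) _{\left( 2\right) }\left( \rho _{2}\right) _{\left( 2\right) }\right\vert \leq \left[ p\right] $) is precisely your correction $\mathcal{C}$, estimated by the same degree count that uses $\theta \leq \frac{\left[ p\right] +1}{p}$, and the remaining terms are handled by Lemma~\ref{Lemma estimate of increments of effects} just as you do. The only discrepancies are trivial bookkeeping ones (the third group supplies $-P_{1}Q_{2}-P_{2}Q_{1}$ rather than $+P_{1}Q_{2}+P_{2}Q_{1}$, and leftover pieces such as $P_{i}\Delta _{j}$, $R_{i}\Delta _{j}$ are omitted from your list but are of even smaller order), none of which affects the argument.
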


\begin{proof}
For $s<t$ in $\left[ 0,T\right] $ and $a\in G_{d}^{\left[ p\right] }$,%
\begin{eqnarray*}
&&\left( \beta \left( X_{t}\right) -\beta \left( X_{s}\right)
_{X_{s,t}}\right) \left( a\right) \\
&=&\int_{0}^{s}\beta _{1}\left( X_{r}\right) dX_{r}\left( \beta _{2}\left(
X_{t}\right) -\left( \beta _{2}\left( X_{s}\right) \right) _{X_{s,t}}\right)
\left( a\right) +\left( \int_{s}^{t}\beta _{1}\left( X_{r}\right)
dX_{r}-\beta _{1}\left( X_{s}\right) \left( X_{s,t}\right) \right) \beta
_{2}\left( X_{t}\right) \left( a\right) \\
&&+\left( \beta _{1}\left( X_{t}\right) -\beta _{1}\left( X_{s}\right)
_{X_{s,t}}\right) \left( a\right) \int_{0}^{s}\beta _{2}\left( X_{r}\right)
dX_{r}+\beta _{1}\left( X_{t}\right) \left( a\right) \left(
\int_{s}^{t}\beta _{2}\left( X_{r}\right) dX_{r}-\beta _{2}\left(
X_{s}\right) \left( X_{s,t}\right) \right) \\
&&+\beta _{1}\left( X_{s}\right) \left( X_{s,t}\right) \left( \beta
_{2}\left( X_{t}\right) -\beta _{2}\left( X_{s}\right) _{X_{s,t}}\right)
\left( a\right) +\left( \beta _{1}\left( X_{t}\right) -\beta _{1}\left(
X_{s}\right) _{X_{s,t}}\right) \left( a\right) \beta _{2}\left( X_{s}\right)
\left( X_{s,t}\right) \\
&&+\left( \beta _{1}\left( X_{t}\right) \beta _{2}\left( X_{t}\right) -\beta
_{1}\left( X_{s}\right) _{X_{s,t}}\beta _{2}\left( X_{s}\right)
_{X_{s,t}}\right) \left( a\right) \\
&&+\sum_{\substack{ \left\vert \rho _{1}\right\vert +\left\vert \rho
_{2}\right\vert \geq \left[ p\right] +1  \\ \rho _{i}\in F_{d}^{\left[ p%
\right] }}}\sum_{\substack{ \bigtriangleup \rho _{1},\bigtriangleup \rho
_{2}  \\ 1\leq \left\vert \left( \rho _{1}\right) _{\left( 2\right) }\left(
\rho _{2}\right) _{\left( 2\right) }\right\vert \leq \left[ p\right] }}\beta
_{1}^{\rho _{1}}\left( X_{s}\right) \beta _{2}^{\rho _{2}}\left(
X_{s}\right) \left( X_{s,t},\left( \rho _{1}\right) _{\left( 1\right)
}\right) \left( X_{s,t},\left( \rho _{2}\right) _{\left( 1\right) }\right)
\left( a,\left( \rho _{1}\right) _{\left( 2\right) }\left( \rho _{2}\right)
_{\left( 2\right) }\right) ,
\end{eqnarray*}%
where $\beta _{i}=\sum_{\rho _{i}\in F_{d}^{\left[ p\right] }}\beta
_{i}^{\rho _{i}}$ and $\bigtriangleup \rho _{i}=\sum_{\bigtriangleup \rho
_{i}}\left( \rho _{i}\right) _{\left( 1\right) }\otimes \left( \rho
_{i}\right) _{\left( 2\right) },i=1,2$. Combine the assumption with Lemma %
\ref{Lemma estimate of increments of effects}, the result follows.
\end{proof}

\begin{notation}
For a forest $\rho \ $and a tree $\tau $, $\left\vert \tau \right\vert \geq
1 $, let $\rho \succ \tau $ denote the tree of degree $\left\vert \rho
\right\vert +\left\vert \tau \right\vert $ obtained by grafting the roots of
all trees in $\rho $ to the root of $\tau $. For the empty forest $\epsilon $%
, $\epsilon \succ \tau :=\tau $ for any tree $\tau $, $\left\vert \tau
\right\vert \geq 1$.
\end{notation}

\begin{notation}
\label{Notation iterated integral of linear maps}For $\phi _{i}:G_{d}^{\left[
p\right] }\rightarrow
\mathbb{R}
,i=1,2$, assume $\phi _{1}=\sum_{\rho \in F_{d}^{\left[ p\right] }}\phi
_{1}^{\rho }$ with $\phi _{1}^{\rho }\in
\mathbb{R}
$ and $\phi _{2}=\sum_{\tau \in T_{d}^{\left[ p\right] }}\phi _{2}^{\tau }$
with $\phi _{2}^{\tau }\in
\mathbb{R}
$. Define $\phi _{1}\succ \phi _{2}:G_{d}^{\left[ p\right] }\rightarrow
\mathbb{R}
$ as
\begin{equation*}
\left( \phi _{1}\succ \phi _{2}\right) \left( a\right) :=\sum_{\rho \in
F_{d}^{\left[ p\right] },\tau \in T_{d}^{\left[ p\right] },\left\vert \rho
\right\vert +\left\vert \tau \right\vert \leq \left[ p\right] }\phi
_{1}^{\rho }\phi _{2}^{\tau }\left( a,\rho \succ \tau \right) \text{ for }%
a\in G_{d}^{\left[ p\right] }.
\end{equation*}
\end{notation}

\begin{lemma}
\label{Lemma effects are stable under iterated integral}Suppose $\beta
_{i},i=1,2$ are two slowly-varying one-forms along $X$ taking values in $%
\mathbb{R}
$. Assume for some $\theta \in \left( 1,\frac{\left[ p\right] +1}{p}\right] $%
, $\left\Vert \beta _{i}\right\Vert _{\theta }<\infty $, $i=1,2$. Further
assume that $\beta _{2}=\sum_{\tau \in T_{d}^{\left[ p\right] }}\beta
_{2}^{\tau }$ only contains tree components. For $t\in \left[ 0,T\right] $
and $a\in G_{d}^{\left[ p\right] }$, define%
\begin{equation*}
\beta \left( X_{t}\right) \left( a\right) :=\int_{0}^{t}\beta _{1}\left(
X_{r}\right) dX_{r}\beta _{2}\left( X_{t}\right) \left( a\right) +\left(
\beta _{1}\left( X_{t}\right) \succ \beta _{2}\left( X_{t}\right) \right)
\left( a\right) .
\end{equation*}%
Then $\beta $ is a slowly-varying one-form along $X$ taking values in $%
\mathbb{R}
$ and there exists a constant $C$ only depending on $p,d,\theta ,\omega
\left( 0,T\right) $ such that%
\begin{equation*}
\left\Vert \beta \right\Vert _{\theta }\leq C_{p,d,\theta ,\omega \left(
0,T\right) }\left\Vert \beta _{1}\right\Vert _{\theta }\left\Vert \beta
_{2}\right\Vert _{\theta }
\end{equation*}
\end{lemma}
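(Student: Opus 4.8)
The plan is to follow the structure of the proof of Lemma~\ref{Lemma effects are stable under multiplication}, replacing the commutative product by the grafting $\succ$; because $\beta$ now has only two summands instead of three, the expansion of the increment $\big(\beta(X_t)-\beta(X_s)_{X_{s,t}}\big)(a)$ is shorter but asymmetric. Writing $y_1(t):=\int_0^t\beta_1(X_r)\,dX_r$, I first treat the difference $y_1(t)\beta_2(X_t)(a)-y_1(s)\beta_2(X_s)_{X_{s,t}}(a)$, splitting it (exactly as for the first two terms there) into $y_1(s)\big(\beta_2(X_t)-\beta_2(X_s)_{X_{s,t}}\big)(a)$, the effect remainder $\big(\int_s^t\beta_1\,dX-\beta_1(X_s)(X_{s,t})\big)\beta_2(X_t)(a)$, and a leftover $\beta_1(X_s)(X_{s,t})\,\beta_2(X_t)(a)$ that I keep for a later cancellation.

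The grafting part $\big(\beta_1(X_t)\succ\beta_2(X_t)\big)(a)-\big(\beta_1(X_s)\succ\beta_2(X_s)\big)_{X_{s,t}}(a)$ requires one new algebraic identity. Since $\rho\succ\tau=[\rho\zeta]_j$ whenever $\tau=[\zeta]_j$, iterating the displayed cocycle identity $\bigtriangleup[\omega]_j=[\omega]_j\otimes1+\sum_{\bigtriangleup\omega}\omega_{(1)}\otimes[\omega_{(2)}]_j$ and using that $\bigtriangleup$ is an algebra morphism gives
\[
\bigtriangleup(\rho\succ\tau)=(\rho\succ\tau)\otimes1+\sum\rho_{(1)}\tau_{(1)}\otimes(\rho_{(2)}\succ\tau_{(2)}),
\]
the sum running over the admissible cuts of $\rho$ and of $\tau$ with the root of $\tau$ kept in $\tau_{(2)}$; in particular every $\tau_{(2)}$ is a tree, so each $\rho_{(2)}\succ\tau_{(2)}$ is a legitimate grafting. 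Feeding this into $\big(\beta_1(X_s)\succ\beta_2(X_s)\big)_{X_{s,t}}(a)=\big(\beta_1(X_s)\succ\beta_2(X_s)\big)(X_{s,t}a)-\big(\beta_1(X_s)\succ\beta_2(X_s)\big)(X_{s,t})$, and using the multiplicativity of the character $X_{s,t}$ on forest products, I would identify this quantity as $\beta_1(X_s)(X_{s,t})\,\beta_2(X_s)_{X_{s,t}}(a)$ (the $\rho_{(2)}=\epsilon$ contribution) plus a sum that differs from $\big(\beta_1(X_s)_{X_{s,t}}\succ\beta_2(X_s)_{X_{s,t}}\big)(a)$ only by a truncation remainder supported on index forests $\mu_1,\mu_2$ with $|\mu_1|+|\mu_2|\ge[p]+1$. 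The $\beta_1(X_s)(X_{s,t})\,\beta_2(X_s)_{X_{s,t}}(a)$ term then cancels the matching part of the leftover from the first paragraph, leaving only $\beta_1(X_s)(X_{s,t})\big(\beta_2(X_t)-\beta_2(X_s)_{X_{s,t}}\big)(a)$.

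After this reorganisation the increment is a sum of five groups: $y_1(s)$ and $\beta_1(X_s)(X_{s,t})$ each times the $\beta_2$-increment; the $\beta_1$-effect remainder times $\beta_2(X_t)$; the joint difference $\big(\beta_1(X_t)\succ\beta_2(X_t)-\beta_1(X_s)_{X_{s,t}}\succ\beta_2(X_s)_{X_{s,t}}\big)(a)$, split by bilinearity of $\succ$ into the $\beta_1$-increment grafted onto $\beta_2(X_t)$ plus $\beta_1(X_s)_{X_{s,t}}$ grafted onto the $\beta_2$-increment; and the truncation remainder. Each group is estimated componentwise: $|y_1(s)|$, $|\beta_1(X_s)(X_{s,t})|$ and the effect remainder are controlled by Lemma~\ref{Lemma estimate of increments of effects}, the slowly-varying increments by $\|\beta_i\|_\theta\,\omega(s,t)^{\theta-|\cdot|/p}$, and surplus powers of $\omega(s,t)$ are absorbed into the constant via $\omega(s,t)\le\omega(0,T)$. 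Together with the direct bound $\|\beta(X_t)\|\le C\|\beta_1\|_\theta\|\beta_2\|_\theta$, this yields $\|\beta\|_\theta\le C_{p,d,\theta,\omega(0,T)}\|\beta_1\|_\theta\|\beta_2\|_\theta$.

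The main obstacle is the truncation remainder, which is exactly where the hypothesis $\theta\le\frac{[p]+1}{p}$ enters: on a degree-$n$ output component the remainder coefficient carries $(X_{s,t},\mu_{1(1)})(X_{s,t},\mu_{2(1)})$, hence a factor $\omega(s,t)^{(|\mu_1|+|\mu_2|-n)/p}$, and since $|\mu_1|+|\mu_2|\ge[p]+1\ge\theta p$ this is at most $C\,\omega(s,t)^{\theta-n/p}$, the decay needed for $\beta$ to be slowly varying. Verifying the coproduct identity for $\rho\succ\tau$, tracking the $\rho_{(2)}=\epsilon$ cancellation, and checking the degree accounting on the remainder are the delicate points; once these are settled the remaining estimates mirror those of Lemma~\ref{Lemma effects are stable under multiplication} line by line.
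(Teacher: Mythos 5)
Your proposal is correct and takes essentially the same route as the paper: the same coproduct identity $\bigtriangleup(\rho\succ\tau)=(\rho\succ\tau)\otimes 1+\sum\rho_{(1)}\tau_{(1)}\otimes(\rho_{(2)}\succ\tau_{(2)})$, the same five-term decomposition of $\left(\beta(X_t)-\beta(X_s)_{X_{s,t}}\right)(a)$ (two $\beta_2$-increment terms, the $\beta_1$-effect remainder, the joint grafting difference, and the truncation remainder over $|\rho|+|\tau|\geq[p]+1$), estimated via Lemma \ref{Lemma estimate of increments of effects} and the hypothesis $\theta\leq\frac{[p]+1}{p}$. In fact you supply details the paper leaves implicit, namely the derivation of the coproduct identity from the cocycle property and the explicit cancellation of $\beta_1(X_s)(X_{s,t})\,\beta_2(X_s)_{X_{s,t}}(a)$ against the leftover product term.
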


\begin{proof}
For a forest $\rho $ and a tree $\tau $, $\left\vert \tau \right\vert \geq 1$%
,%
\begin{equation*}
\bigtriangleup \left( \rho \succ \tau \right) =\left( \rho \succ \tau
\right) \otimes 1+\sum_{\bigtriangleup \rho ,\bigtriangleup \tau ,\left\vert
\tau _{\left( 2\right) }\right\vert \geq 1}\left( \rho _{\left( 1\right)
}\tau _{\left( 1\right) }\right) \otimes \left( \rho _{\left( 2\right)
}\succ \tau _{\left( 2\right) }\right)
\end{equation*}%
where $\bigtriangleup \rho =\sum_{\bigtriangleup \rho }\rho _{\left(
1\right) }\otimes \rho _{\left( 2\right) }$ and $\bigtriangleup \tau
=\sum_{\bigtriangleup \tau }\tau _{\left( 1\right) }\otimes \tau _{\left(
2\right) }$.

For $s<t$ in $\left[ 0,T\right] $ and $a\in G_{d}^{\left[ p\right] }$,%
\begin{eqnarray*}
&&\left( \beta \left( X_{t}\right) -\beta \left( X_{s}\right)
_{X_{s,t}}\right) \left( a\right) \\
&=&\int_{0}^{s}\beta _{1}\left( X_{r}\right) dX_{r}\left( \beta _{2}\left(
X_{t}\right) -\beta _{2}\left( X_{s}\right) _{X_{s,t}}\right) \left( a\right)
\\
&&+\left( \int_{s}^{t}\beta _{1}\left( X_{r}\right) dX_{r}-\beta _{1}\left(
X_{s}\right) \left( X_{s,t}\right) \right) \beta _{2}\left( X_{t}\right)
\left( a\right) \\
&&+\beta _{1}\left( X_{s}\right) \left( X_{s,t}\right) \left( \beta
_{2}\left( X_{t}\right) -\beta _{2}\left( X_{s}\right) _{X_{s,t}}\right)
\left( a\right) \\
&&+\left( \beta _{1}\left( X_{t}\right) \succ \beta _{2}\left( X_{t}\right)
-\beta _{1}\left( X_{s}\right) _{X_{s,t}}\succ \beta _{2}\left( X_{s}\right)
_{X_{s,t}}\right) \left( a\right) \\
&&+\sum_{\substack{ \left\vert \rho \right\vert +\left\vert \tau \right\vert
\geq \left[ p\right] +1  \\ \rho \in F_{d}^{\left[ p\right] },\tau \in
T_{d}^{\left[ p\right] }}}\sum_{\substack{ \bigtriangleup \rho
,\bigtriangleup \tau  \\ \left\vert \rho _{\left( 2\right) }\right\vert
+\left\vert \tau _{\left( 2\right) }\right\vert \leq \left[ p\right]
,\left\vert \tau _{\left( 2\right) }\right\vert \geq 1}}\beta _{1}^{\rho
}\left( X_{s}\right) \beta _{2}^{\tau }\left( X_{s}\right) \left(
X_{s,t},\rho _{\left( 1\right) }\right) \left( X_{s,t},\tau _{\left(
1\right) }\right) \left( a,\rho _{\left( 2\right) }\succ \tau _{\left(
2\right) }\right)
\end{eqnarray*}%
where $\beta _{1}=\sum_{\rho \in F_{d}^{\left[ p\right] }}\beta _{1}^{\rho }$
and $\beta _{2}=\sum_{\tau \in T_{d}^{\left[ p\right] }}\beta _{2}^{\tau }$.
Combine the assumption with Lemma \ref{Lemma estimate of increments of
effects}, the result follows.
\end{proof}

\begin{definition}
\label{Definition of Y tilde}Recall $\beta =\left( \beta ^{1},\cdots ,\beta
^{e}\right) $ defined at $\left( \ref{Definition beta first level one-form}%
\right) $. Fix $s\in \left[ 0,T\right) $. For $\rho \in F_{e}^{\left[ p%
\right] }$, define $B_{\rho }:G_{d}^{\left[ p\right] }\rightarrow
\mathbb{R}
$ recursively as $B_{\bullet _{j}}:=\beta ^{j}\left( X_{s}\right) $, $%
j=1,\dots ,e$; for trees $\tau _{k}\in T_{e}^{\left[ p\right] }$, $k=1,\dots
,l$, $\left\vert \tau _{1}\right\vert +\cdots +\left\vert \tau
_{l}\right\vert \leq \left[ p\right] $, with $B_{\tau _{1}}\cdots B_{\tau
_{l}}$ defined in Notation \ref{Notation multiplication of linear maps},
define%
\begin{equation*}
B_{\tau _{1}\cdots \tau _{l}}:=B_{\tau _{1}}\cdots B_{\tau _{l}}\text{;}
\end{equation*}%
for forest $\rho \in F_{e}^{\left[ p\right] }$, $\left\vert \rho \right\vert
\leq \left[ p\right] -1$ and $j=1,\dots ,e$, with $B_{\rho }\succ B_{\bullet
_{j}}$ defined in Notation \ref{Notation iterated integral of linear maps},
define%
\begin{equation*}
B_{\left[ \rho \right] _{j}}:=B_{\rho }\succ B_{\bullet _{j}}\text{.}
\end{equation*}%
For $t\in (s,T]$, define $\widetilde{Y}_{s,t}:\left\{ \rho \in
F_{e}|\left\vert \rho \right\vert \leq \left[ p\right] \right\} \rightarrow
\mathbb{R}
\ $as $\left( \widetilde{Y}_{s,t},\epsilon \right) :=1$ and for $\rho \in
F_{e}^{\left[ p\right] }$,
\begin{equation*}
\left( \widetilde{Y}_{s,t},\rho \right) :=B_{\rho }\left( X_{s,t}\right) .
\end{equation*}
\end{definition}

\begin{lemma}
Recall $Y\ $in Definition \ref{Definition of rough integral}. Then there
exists a constant $C$ that only depends on $p,\gamma ,d,\omega \left(
0,T\right) $ such that for every $s<t$ in $\left[ 0,T\right] $ and every $%
\rho \in F_{e}^{\left[ p\right] }$,%
\begin{eqnarray}
\left\vert \left( Y_{s,t},\rho \right) -\left( \widetilde{Y}_{s,t},\rho
\right) \right\vert &\leq &C_{p,\gamma ,d,\omega \left( 0,T\right)
}\left\Vert f\right\Vert _{Lip\left( \gamma -1\right) }^{\left\vert \rho
\right\vert }\omega \left( s,t\right) ^{\frac{\gamma }{p}}
\label{estimate Y s,t rho and Y tilde s,t rho are close} \\
\left\vert \left( Y_{s,t},\rho \right) \right\vert ^{\frac{1}{\left\vert
\rho \right\vert }} &\leq &C_{p,\gamma ,d,\omega \left( 0,T\right)
}\left\Vert f\right\Vert _{Lip\left( \gamma -1\right) }\omega \left(
s,t\right) ^{\frac{1}{p}}  \label{estimate norm on Ys,t rho}
\end{eqnarray}
\end{lemma}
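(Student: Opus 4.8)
The plan is to prove both estimates simultaneously by induction on the degree $n=\left\vert \rho \right\vert $, reading off from the construction underlying Definition \ref{Definition of rough integral} that $\widetilde{Y}_{s,t}$ is exactly the frozen-coefficient approximation of $Y_{s,t}$ at the left endpoint $s$. Fixing $s$, the inductive claim I would carry is that the scalar path $r\mapsto \left( Y_{s,r},\rho \right) $ on $\left[ s,T\right] $ is an effect of $X$, produced by the stability lemmas, whose slowly-varying one-form $\gamma _{\rho }$ has frozen value $\gamma _{\rho }\left( X_{s}\right) =B_{\rho }$ and norm $\left\Vert \gamma _{\rho }\right\Vert _{\theta }\leq C\left\Vert f\right\Vert _{Lip\left( \gamma -1\right) }^{\left\vert \rho \right\vert }$, where $\theta =\gamma /p$ and $C$ depends only on $p,\gamma ,d,\omega \left( 0,T\right) $. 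Since $\left( Y_{s,s},\rho \right) =0$ for $\left\vert \rho \right\vert \geq 1$, the effect increment over $\left[ s,t\right] $ is $\left( Y_{s,t},\rho \right) $ itself, and by Definition \ref{Definition of Y tilde} its leading term is $\left( \widetilde{Y}_{s,t},\rho \right) =B_{\rho }\left( X_{s,t}\right) =\gamma _{\rho }\left( X_{s}\right) \left( X_{s,t}\right) $. Estimate $\left( \ref{estimate Y s,t rho and Y tilde s,t rho are close}\right) $ then follows at once from Lemma \ref{Lemma estimate of increments of effects}, which bounds $\left\vert \left( Y_{s,t},\rho \right) -\gamma _{\rho }\left( X_{s}\right) \left( X_{s,t}\right) \right\vert $ by $C\left\Vert \gamma _{\rho }\right\Vert _{\theta }\omega \left( s,t\right) ^{\theta }\leq C\left\Vert f\right\Vert _{Lip\left( \gamma -1\right) }^{\left\vert \rho \right\vert }\omega \left( s,t\right) ^{\gamma /p}$.

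The inductive construction of $\gamma _{\rho }$ splits into the three cases of Definition \ref{Definition of rough integral}. For the base case $\rho =\bullet _{j}$ I take $\gamma _{\bullet _{j}}=\beta ^{j}$, the $j$-th component of the one-form $\left( \ref{Definition beta first level one-form}\right) $, so that $\gamma _{\bullet _{j}}\left( X_{s}\right) =B_{\bullet _{j}}$ and $\left\Vert \beta ^{j}\right\Vert _{\theta }\leq \left\Vert f\right\Vert _{Lip\left( \gamma -1\right) }$ by Lemma \ref{Lemma beta is a slowly varying one-form}. For a forest $\rho =\tau _{1}\cdots \tau _{l}$ with $l\geq 2$ I would write $\left( Y_{s,\cdot },\rho \right) =\prod _{k=1}^{l}\left( Y_{s,\cdot },\tau _{k}\right) $ and apply Lemma \ref{Lemma effects are stable under multiplication} repeatedly; the frozen value of the resulting one-form is the product of linear maps $B_{\tau _{1}}\cdots B_{\tau _{l}}=B_{\rho }$ of Notation \ref{Notation multiplication of linear maps}, and its norm is at most $C\prod _{k}\left\Vert \gamma _{\tau _{k}}\right\Vert _{\theta }\leq C\left\Vert f\right\Vert _{Lip\left( \gamma -1\right) }^{\left\vert \rho \right\vert }$. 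For a tree $\rho =\left[ \sigma \right] _{j}$ I would read $\left( \ref{definition of Y bracket sigma}\right) $ as the iterated integral of the effect $\left( Y_{s,\cdot },\sigma \right) $ against $y^{j}$ and apply Lemma \ref{Lemma effects are stable under iterated integral} with $\beta _{1}=\gamma _{\sigma }$ and $\beta _{2}=\beta ^{j}$; this is admissible because $\beta ^{j}$ has only tree components (the remark following Lemma \ref{Lemma beta is a slowly varying one-form}), and evaluating the new one-form at the base point annihilates its $\int _{s}^{s}$ term and leaves $B_{\sigma }\succ B_{\bullet _{j}}=B_{\left[ \sigma \right] _{j}}$ of Notation \ref{Notation iterated integral of linear maps}, with norm at most $C\left\Vert \gamma _{\sigma }\right\Vert _{\theta }\left\Vert \beta ^{j}\right\Vert _{\theta }\leq C\left\Vert f\right\Vert _{Lip\left( \gamma -1\right) }^{\left\vert \rho \right\vert }$. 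As the recursion depth and branching for a single forest of degree at most $\left[ p\right] $ are bounded, the accumulated constant remains of the stated form.

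For $\left( \ref{estimate norm on Ys,t rho}\right) $ I would combine $\left( \ref{estimate Y s,t rho and Y tilde s,t rho are close}\right) $ with a direct bound on the leading term. A parallel induction over the same three cases shows that $B_{\rho }=\sum _{\mu }B_{\rho }^{\mu }$ involves only forests $\mu \in F_{d}^{\left[ p\right] }$ with $\left\vert \mu \right\vert \geq \left\vert \rho \right\vert $ and coefficients $\left\vert B_{\rho }^{\mu }\right\vert \leq C\left\Vert f\right\Vert _{Lip\left( \gamma -1\right) }^{\left\vert \rho \right\vert }$, since both the product of Notation \ref{Notation multiplication of linear maps} and the grafting $\succ $ of Notation \ref{Notation iterated integral of linear maps} add degrees. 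Using $\left\vert \left( X_{s,t},\mu \right) \right\vert \leq \omega \left( s,t\right) ^{\left\vert \mu \right\vert /p}$ and $\omega \left( s,t\right) \leq \omega \left( 0,T\right) $ gives $\left\vert B_{\rho }\left( X_{s,t}\right) \right\vert \leq C\left\Vert f\right\Vert _{Lip\left( \gamma -1\right) }^{\left\vert \rho \right\vert }\omega \left( s,t\right) ^{\left\vert \rho \right\vert /p}$. Adding the two contributions, and using $\gamma >\left[ p\right] \geq \left\vert \rho \right\vert $ to absorb $\omega \left( s,t\right) ^{\gamma /p}$ into $\omega \left( s,t\right) ^{\left\vert \rho \right\vert /p}$ at the cost of a factor of $\omega \left( 0,T\right) $, yields $\left\vert \left( Y_{s,t},\rho \right) \right\vert \leq C\left\Vert f\right\Vert _{Lip\left( \gamma -1\right) }^{\left\vert \rho \right\vert }\omega \left( s,t\right) ^{\left\vert \rho \right\vert /p}$; taking $\left\vert \rho \right\vert $-th roots gives $\left( \ref{estimate norm on Ys,t rho}\right) $.

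I expect the main obstacle to be the tree case $\rho =\left[ \sigma \right] _{j}$: one must check that the recursion $\left( \ref{definition of Y bracket sigma}\right) $ matches the hypotheses of Lemma \ref{Lemma effects are stable under iterated integral} exactly, namely that the integrand $\left( Y_{s,\cdot },\sigma \right) $ is precisely the effect produced at the previous step with frozen one-form $B_{\sigma }$, and that evaluating the newly produced one-form at the base point reproduces $B_{\sigma }\succ B_{\bullet _{j}}$. This base-point bookkeeping, together with the observation that the stability lemmas may be run with base point $s$ in place of $0$ because their increment estimates are insensitive to the base point while $\omega \left( 0,T\right) $ still controls every $\int _{s}^{t}$ effect, is where the care lies; the single-vertex and product cases are comparatively routine.
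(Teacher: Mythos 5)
Your proposal is correct and is essentially the paper's own proof: both fix the base point $s$, run an induction on $\left\vert \rho \right\vert $ showing that $t\mapsto \left( Y_{s,t},\rho \right) $ is an effect whose slowly-varying one-form (built from Lemma \ref{Lemma effects are stable under multiplication} and Lemma \ref{Lemma effects are stable under iterated integral} with base point $s$, so that the $\int_{s}^{s}$ terms vanish at the base point) has frozen value $B_{\rho }$ and $\theta $-norm at most $C\left\Vert f\right\Vert _{Lip\left( \gamma -1\right) }^{\left\vert \rho \right\vert }$ with $\theta =\gamma /p$, then deduce $\left( \ref{estimate Y s,t rho and Y tilde s,t rho are close}\right) $ from Lemma \ref{Lemma estimate of increments of effects} and $\left( \ref{estimate norm on Ys,t rho}\right) $ by adding the degree-counting bound on $\left( \widetilde{Y}_{s,t},\rho \right) $. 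The only (immaterial) divergence is in the product case $\rho =\rho _{1}\rho _{2}$: the paper bounds $\left( Y_{s,t},\rho _{1}\right) \left( Y_{s,t},\rho _{2}\right) -\left( \widetilde{Y}_{s,t},\rho \right) $ by a direct expansion using the inductive hypothesis plus an explicit truncation cross-term over $\left\vert \lambda _{1}\right\vert +\left\vert \lambda _{2}\right\vert \geq \left[ p\right] +1$, whereas you route this case through Lemma \ref{Lemma effects are stable under multiplication} exactly as in the tree case — the same computation, merely packaged inside the lemma rather than written out.
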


\begin{proof}
Based on Definition \ref{Definition of Y tilde}, it can be proved
inductively that, for every $\rho \in F_{e}^{\left[ p\right] }$,
\begin{equation}
\left\vert \left( \widetilde{Y}_{s,t},\rho \right) \right\vert \leq
C_{p,\gamma ,d,\omega \left( 0,T\right) }\left\Vert f\right\Vert _{Lip\left(
\gamma -1\right) }^{\left\vert \rho \right\vert }\omega \left( s,t\right) ^{%
\frac{\left\vert \rho \right\vert }{p}}.
\label{inner estimate Y tilde s,t rho}
\end{equation}%
We only prove $\left( \ref{estimate Y s,t rho and Y tilde s,t rho are close}%
\right) $. Then $\left( \ref{estimate norm on Ys,t rho}\right) $ follows
from $\left( \ref{estimate Y s,t rho and Y tilde s,t rho are close}\right) $
and $\left( \ref{inner estimate Y tilde s,t rho}\right) $.

When $\left\vert \rho \right\vert =1$, $\left( \ref{estimate Y s,t rho and Y
tilde s,t rho are close}\right) $ is true based on Lemma \ref{Lemma beta is
a slowly varying one-form} and Lemma \ref{Lemma estimate of increments of
effects}. Suppose $\left( \ref{estimate Y s,t rho and Y tilde s,t rho are
close}\right) $ holds for $\left\vert \rho \right\vert \leq k,\rho \in
F_{e},k=1,\dots ,\left[ p\right] -1$.

Suppose $\left\vert \rho \right\vert =k+1$ and $\rho =\rho _{1}\rho _{2}$
for $\rho _{i}\in F_{e}^{\left[ p\right] },\left\vert \rho _{i}\right\vert
\geq 1$. Based on Definition \ref{Definition of Y tilde}, when $B_{\rho
}=\sum_{\lambda \in F_{d}^{\left[ p\right] }}B_{\rho }^{\lambda }$, we have $%
\left\vert B_{\rho }^{\lambda }\right\vert \leq C_{p,d}\left\Vert
f\right\Vert _{Lip\left( \gamma -1\right) }^{\left\vert \rho \right\vert }$.
Then based on the inductive hypothesis, $\left( \ref{inner estimate Y tilde
s,t rho}\right) $ and that $\gamma \in (p,\left[ p\right] +1]$,
\begin{eqnarray*}
&&\left\vert \left( Y_{s,t},\rho \right) -\left( \widetilde{Y}_{s,t},\rho
\right) \right\vert \\
&\leq &\left\vert \left( Y_{s,t},\rho _{1}\right) -\left( \widetilde{Y}%
_{s,t},\rho _{1}\right) \right\vert \left\vert \left( Y_{s,t},\rho
_{2}\right) \right\vert +\left\vert \left( \widetilde{Y}_{s,t},\rho
_{1}\right) \right\vert \left\vert \left( Y_{s,t},\rho _{2}\right) -\left(
\widetilde{Y}_{s,t},\rho _{2}\right) \right\vert \\
&&+\sum_{\left\vert \lambda _{1}\right\vert +\left\vert \lambda
_{2}\right\vert \geq \left[ p\right] +1,\lambda _{i}\in F_{d}^{\left[ p%
\right] }}\left\vert B_{\rho _{1}}^{\lambda _{1}}B_{\rho _{2}}^{\lambda
_{2}}\left( X_{s,t},\lambda _{1}\right) \left( X_{s,t},\lambda _{2}\right)
\right\vert \\
&\leq &C_{p,\gamma ,d,\omega \left( 0,T\right) }\left\Vert f\right\Vert
_{Lip\left( \gamma -1\right) }^{\left\vert \rho \right\vert }\omega \left(
s,t\right) ^{\frac{\gamma }{p}}
\end{eqnarray*}

Suppose $\left\vert \tau \right\vert =k+1$ and $\tau \in F_{e}^{\left[ p%
\right] }\ $is a tree. Fix the starting point $s\in \lbrack 0,T)$. For $\rho
\in F_{e}^{\left[ p\right] }$, let $\beta _{s}^{\rho }$ denote the
slowly-varying one-form associated with the path $t\mapsto \left(
Y_{s,t},\rho \right) $ for $t\geq s$. Combining estimates of one-forms in
Lemma \ref{Lemma effects are stable under multiplication} and Lemma \ref%
{Lemma effects are stable under iterated integral}, we have
\begin{equation}
\left\Vert \beta _{s}^{\rho }\right\Vert _{\frac{\gamma }{p}}\leq
C_{p,\gamma ,d,\omega \left( 0,T\right) }\left\Vert f\right\Vert _{Lip\left(
\gamma -1\right) }^{\left\vert \rho \right\vert }.
\label{inner estimate beta s rho}
\end{equation}%
Based on the formulae for one-forms in Lemma \ref{Lemma effects are stable
under multiplication} and Lemma \ref{Lemma effects are stable under iterated
integral}, the part of $\beta _{s}^{\rho }\left( X_{s}\right) $ that
consists of integrals of one-forms on $\left[ 0,s\right] $ vanishes (since
the starting point here is $s$, not $0$), leaving only the diagonal term.
Hence, it can be proved by induction on $\left\vert \rho \right\vert $ for $%
\rho \in F_{e}^{\left[ p\right] }$ that $\beta _{s}^{\rho }\left(
X_{s}\right) =B_{\rho }$ in Definition \ref{Definition of Y tilde}. Then
based on Definition \ref{Definition of Y tilde}, for the tree $\tau $ that
we are considering, $(\widetilde{Y}_{s,t},\tau ):=B_{\tau }\left(
X_{s,t}\right) =\beta _{s}^{\tau }\left( X_{s}\right) \left( X_{s,t}\right) $%
. On the other hand, based on Definition \ref{Definition of rough integral},
$\left( Y_{s,t},\tau \right) =\int_{r=s}^{t}\beta _{s}^{\tau }\left(
X_{r}\right) dX_{r}$. By combining $\left( \ref{inner estimate beta s rho}%
\right) $ for the tree $\tau $ with Lemma \ref{Lemma estimate of increments
of effects}, we have
\begin{eqnarray*}
\left\vert \left( Y_{s,t},\tau \right) -\left( \widetilde{Y}_{s,t},\tau
\right) \right\vert &=&\left\vert \int_{r=s}^{t}\beta _{s}^{\tau }\left(
X_{r}\right) dX_{r}-\beta _{s}^{\tau }\left( X_{s}\right) \left(
X_{s,t}\right) \right\vert \\
&\leq &C_{p,\gamma ,d,\omega \left( 0,T\right) }\left\Vert f\right\Vert
_{Lip\left( \gamma -1\right) }^{\left\vert \tau \right\vert }\omega \left(
s,t\right) ^{\frac{\gamma }{p}}.
\end{eqnarray*}
\end{proof}

\begin{proof}[Proof of Theorem \protect\ref{Theorem integral of branched
rough paths}]
Based on Definition \ref{Definition of Y tilde}, for each $\rho \in F_{e}^{%
\left[ p\right] }$, $(\widetilde{Y}_{s,t},\rho )$ is a linear combination of
$\left( X_{s,t},\lambda \right) $ for $\lambda \in F_{d}^{\left[ p\right] }$%
, $\left\vert \lambda \right\vert \geq \left\vert \rho \right\vert $, with
the coefficients given by linear combinations of derivatives of $f$ at $\pi
_{1}\left( X_{s}\right) $ of order $0,1,\dots ,\left[ p\right] -1$. Since $f$
is $Lip\left( \gamma -1\right) $ for $\gamma >p$, these coefficients are at
least $\left( \gamma -\left[ p\right] \right) $-H\"{o}lder continuous.
Suppose $X^{n},n=1,2,\dots $ is a sequence of branched $p$-rough paths that
converges to $X$ in $d_{p}$-metric. Based on Definition \ref{Definition of Y
tilde}, define $\widetilde{Y}^{n}$ and $\widetilde{Y}$ associated with $%
X^{n} $ and $X$ respectively. Based on Minkowski inequality and the
monotonicity of $l_{p}$ norm, $\widetilde{Y}^{n}$ converge to $\widetilde{Y}$
in $d_{p}$-metric as $n\rightarrow \infty $. Based on $\left( \ref{estimate
Y s,t rho and Y tilde s,t rho are close}\right) $ and a uniqueness proof
that is similar to that of \cite[Theorem 4.3]{lyons2007differential}, $Y$ is
the unique branched $p$-rough path associated with $\tilde{Y}$. Then based
on an argument similar to \cite[Theorem 3.2.2]{lyons2002system}, $Y$ is
continuous with respect to $X$ in $d_{p}$-metric.

The constant $C_{p,\gamma ,d,\omega \left( 0,T\right) }$ in $\left( \ref%
{estimate norm on Ys,t rho}\right) $ can be chosen to be monotone increasing
in $\omega \left( 0,T\right) $. Hence, when $\omega \left( s,t\right) \leq 1$%
,%
\begin{equation}
\left\Vert Y_{s,t}\right\Vert :=\max_{\rho \in F_{e}^{\left[ p\right]
}}\left\vert \left( Y_{s,t},\rho \right) \right\vert ^{\frac{1}{\left\vert
\rho \right\vert }}\leq C_{p,\gamma ,d}\left\Vert f\right\Vert _{Lip\left(
\gamma -1\right) }\omega \left( s,t\right) ^{\frac{1}{p}}
\label{inner estimate Ys,t norm}
\end{equation}%
Let $\mathcal{G}_{e}^{\left[ p\right] }$ denote the step-$\left[ p\right] $
truncated group of grouplike elements in Grossman Larson Hopf algebra
labelled with $\left\{ 1,\dots ,e\right\} $. Define $\left\Vert \cdot
\right\Vert ^{\prime }$ on $\mathcal{G}_{e}^{\left[ p\right] }$ as in \cite[%
Definition 3.2]{yang2022remainder}. Then $\left\Vert \cdot \right\Vert
^{\prime }$ satisfies the triangle inequality, and is a continuous
homogeneous norm \cite{yang2022remainder}. For $a\in G_{e}^{\left[ p\right]
} $, define $\overline{a}:\left\{ \rho |\rho \in F_{e},\left\vert \rho
\right\vert \leq \left[ p\right] \right\} \rightarrow
\mathbb{R}
$ as $\left( \overline{a},\epsilon \right) =1$ and $\left( \overline{a},\rho
\right) =\left( a,\rho \right) /\sigma \left( \rho \right) $ for $\rho \in
F_{e}^{\left[ p\right] }$ with $\sigma \left( \rho \right) $ at $\left( \ref%
{definition of symmetry factor}\right) $. Then based on \cite[Theorem 43]%
{foissy2002algebresb}\cite[Proposition 4.4]{hoffman2003combinatorics}, $%
\overline{a}\in \mathcal{G}_{e}^{\left[ p\right] }$. Since all continuous
homogeneous norms on $\mathcal{G}_{e}^{\left[ p\right] }$ are equivalent
\cite[Proposition 3.4]{yang2022remainder}, $\left\Vert \overline{a}%
\right\Vert ^{\prime }$ is equivalent to $\max_{\rho \in F_{e}^{\left[ p%
\right] }}\left\vert \left( \overline{a},\rho \right) \right\vert ^{\frac{1}{%
\left\vert \rho \right\vert }}$ up to a constant depending on $p,e$. Then
with $\left\Vert a\right\Vert :=\max_{\rho \in F_{e}^{\left[ p\right]
}}\left\vert \left( a,\rho \right) \right\vert ^{\frac{1}{\left\vert \rho
\right\vert }}$ for $a\in G_{e}^{\left[ p\right] }$, there exist $%
0<c_{p,e}<C_{p,e}<\infty $ such that for every $a\in G_{e}^{\left[ p\right]
} $, $c_{p,e}\left\Vert \overline{a}\right\Vert ^{\prime }\leq \left\Vert
a\right\Vert \leq C_{p,e}\left\Vert \overline{a}\right\Vert ^{\prime }$.
Since $\left\Vert \cdot \right\Vert ^{\prime }$ satisfies the triangle
inequality, based on $\left( \ref{inner estimate Ys,t norm}\right) $ and
\cite[Proposition 5.10]{friz2010multidimensional},
\begin{equation*}
\left\Vert Y\right\Vert _{p-var,\left[ s,t\right] }\leq C_{p,\gamma
,d,e}\left\Vert f\right\Vert _{Lip\left( \gamma -1\right) }\left( \omega
\left( s,t\right) ^{\frac{1}{p}}\vee \omega \left( s,t\right) \right)
\end{equation*}
\end{proof}

\begin{proof}[Proof of Proposition \protect\ref{Proposition same first level
integrals}]
Fix $s\in \left[ 0,T\right) $, denote $x_{s}:=\pi _{1}\left( X_{s}\right) $.
Define a polynomial one-form $p:%
\mathbb{R}
^{d}\rightarrow L\left(
\mathbb{R}
^{d},%
\mathbb{R}
^{e}\right) $ as%
\begin{equation*}
p\left( u\right) :=\sum_{k=0}^{\left[ p\right] -1}\frac{\left( D^{k}f\right)
\left( x_{s}\right) }{k!}\left( u-x_{s}\right) ^{\otimes k}\text{ for }u\in
\mathbb{R}
^{d}\text{.}
\end{equation*}%
Then
\begin{equation}
\left( D^{k}p\right) \left( x_{s}\right) =\left( D^{k}f\right) \left(
x_{s}\right) \text{ for }k=0,1,\dots ,\left[ p\right] -1.
\label{inner estimate derivatives of p equals f}
\end{equation}%
Based on Whitney Extension Theorem \cite[p.177, Theorem 4]{stein1970singular}%
, we restrict $p$ to $\left\{ u\in
\mathbb{R}
^{d}|\left\vert u-x_{s}\right\vert \leq 1\right\} $ and extend $p$ globally
to a $Lip\left( \gamma \right) $ one-form on $%
\mathbb{R}
^{d}$ for $\gamma >p$. We still denote the extension by $p$.

Define $F:%
\mathbb{R}
^{d+e}\rightarrow L\left(
\mathbb{R}
^{d},%
\mathbb{R}
^{d+e}\right) $ as $F\left( x,y\right) \left( v\right) =\left( v,p\left(
x\right) \left( v\right) \right) $ for $\left( x,y\right) \in
\mathbb{R}
^{d+e}$ and $v\in
\mathbb{R}
^{d}$. Let $w$ be the solution of the branched rough differential equation%
\begin{equation*}
dw_{t}=F\left( w_{t}\right) dX_{t},w_{s}=\left( x_{s},0\right) .
\end{equation*}%
Since $F$ is $Lip\left( \gamma \right) $ for $\gamma >p$, the solution $w$
exists uniquely \cite[Theorem 22]{lyons2015theory}.

Suppose $F=\left( F_{1},\cdots ,F_{d}\right) $ with $F_{i}:%
\mathbb{R}
^{d+e}\rightarrow
\mathbb{R}
^{d+e}$. For $\tau _{j}\in T_{d}^{\left[ p\right] },j=1,\dots ,l$ and $%
i=1,\dots ,d$, define recursively $F\left( \left[ \tau _{1}\cdots \tau _{l}%
\right] _{i}\right) :=\left( D^{l}F_{i}\right) \left( F\left( \tau
_{1}\right) \cdots F\left( \tau _{l}\right) \right) $ with $F\left( \bullet
_{i}\right) =F_{i}$ for $i=1,\dots ,d$. Recall $\nu _{k}$, $k=1,\dots ,K$ in
Notation \ref{Notation Bd[p]}. Assume $\nu _{i}=\bullet _{i}$ for $i=1,\dots
,d$. Define one-form $\widetilde{F}:%
\mathbb{R}
^{d+e}\rightarrow L\left(
\mathbb{R}
^{K},%
\mathbb{R}
^{d+e}\right) $ as $\widetilde{F}:=\left( F\left( \nu _{1}\right) ,\cdots
,F\left( \nu _{K}\right) \right) $. Let $\widetilde{w}$ be the solution of%
\begin{equation*}
d\widetilde{w}_{t}=\widetilde{F}\left( \widetilde{w}_{t}\right) dZ_{t},%
\widetilde{w}_{s}=\left( x_{s},0\right)
\end{equation*}%
which can be spelt out as%
\begin{eqnarray*}
dx_{t}^{i} &=&dZ_{t}^{i}\text{ for }i=1,\dots ,d \\
dy_{t} &=&\sum_{k=1}^{K}p_{\nu _{k}}\left( x_{t}\right) dZ_{t}^{k} \\
\left( x_{s},y_{s}\right) &=&\left( x_{s},0\right) ,
\end{eqnarray*}%
where $p_{\nu _{k}},k=1,\dots ,K$ are defined in Notation \ref{Notation f
tau in integrals}.

Since $F\ $is $Lip\left( \gamma \right) $ for $\gamma >p$, based on \cite[%
Proposition 3.16]{yang2022remainder}, $w$ and $\widetilde{w}$ have the same
step-$\left[ p\right] $ Taylor expansion on $\left[ s,t\right] $. Then
combined with $\left( \ref{inner estimate derivatives of p equals f}\right) $%
,
\begin{eqnarray}
&&\sum_{\tau \in T_{d}^{\left[ p\right] }}f_{\tau }\left( x_{s}\right) \frac{%
\left( X_{s,t},\tau \right) }{\sigma \left( \tau \right) }
\label{inner same first level taylor expansion} \\
&=&\sum_{k=1}^{K}\sum_{\substack{ i_{1},\cdots ,i_{l}\in \left\{ 1,\dots
,d\right\}  \\ l=0,\cdots ,\left[ p\right] -\left\vert \nu _{k}\right\vert }}%
D_{i_{1},\cdots ,i_{l}}^{l}f_{\nu _{k}}\left( x_{s}\right) \left(
Z_{s,t},i_{1}\cdots i_{l}k\right)  \notag
\end{eqnarray}%
Based on Lemma \ref{Lemma beta is a slowly varying one-form}, the limit of
the Riemann sums of the left hand side of $\left( \ref{inner same first
level taylor expansion}\right) $ is the first level of the branched rough
integral $\int f\left( X\right) dX$. On the other hand, denote $\Gamma
=\left( \gamma _{1},\cdots ,\gamma _{K}\right) $ with $\gamma _{k}:=\left(
\gamma -\left\vert \nu _{k}\right\vert \right) /p$ and denote $\Pi =\left(
p_{1},\cdots ,p_{K}\right) $ with $p_{k}=p/\left\vert \nu _{k}\right\vert $.
Based on \cite[Definition 3.2]{gyurko2016differential}, $\left\{ f_{\nu
_{k}}\right\} _{k=1}^{K}$ is a $Lip^{\Gamma ,\Pi }$ one-form. Since $\gamma
_{k}>1-1/p_{k}$, $k=1,\dots ,K$, based on \cite[Definition 3.10]%
{gyurko2016differential}, the limit of the Riemann sums of the right hand
side of $\left( \ref{inner same first level taylor expansion}\right) $ is
the first level of the integral $\int g\left( z\right) dZ$ with $g=\left(
g_{1},\cdots ,g_{K}\right) :%
\mathbb{R}
^{d}\rightarrow L\left(
\mathbb{R}
^{K},%
\mathbb{R}
^{e}\right) $ given by $g_{k}=f_{\nu _{k}}$, $k=1,\dots ,K$.
\end{proof}

\bibliographystyle{unsrt}
\bibliography{acompat,roughpath}

\end{document}